\documentclass[a4paper, reqno, 12pt]{amsart}

\usepackage[utf8]{inputenc}
\usepackage{amsthm,amsfonts,amssymb,amsmath,amsxtra,amsrefs}
\usepackage{mathtools}
 \usepackage{bbold}
\usepackage{latexsym}
 \usepackage{stmaryrd}
\usepackage{graphicx}
\usepackage{tikz}
\usetikzlibrary{arrows.meta}
\usepackage{tikz-cd}
\usepackage{todonotes,cancel}
\usepackage[margin=1.25in]{geometry}
\usepackage{mathrsfs}
\usepackage{bm}

\usepackage[all]{xy}
\SelectTips{cm}{}
\usepackage{xr-hyper}
\usepackage[colorlinks=false,
   citecolor=Black,
   linkcolor=Red,
   urlcolor=Blue]{hyperref}
\usepackage{verbatim}
\RequirePackage{xspace}
\RequirePackage{etoolbox}
\RequirePackage{varwidth}
\RequirePackage{enumitem}
\RequirePackage{tensor}
\RequirePackage{mathtools}
\RequirePackage{longtable}
\RequirePackage{multirow}

\usepackage{rotating}

\tikzset{
    labl/.style={anchor=south, rotate=90, inner sep=.5mm},
    satake double arrow/.style={
        double,
        double distance=1.7pt,
        -{Latex[length=2.8pt,width=4.2pt]},
        shorten <=1.2pt,
        shorten >=1.2pt
    }
}

\newtheorem{thm}{Theorem}
\newtheorem{theorem}[thm]{Theorem}
\newtheorem{thmintro}{Theorem}

\newtheorem{prop}[thm]{Proposition}
\newtheorem{proposition}[thm]{Proposition}
\newtheorem{lem}[thm]{Lemma}
\newtheorem{lemma}[thm]{Lemma}

\theoremstyle{definition}
\newtheorem{definition}[thm]{Definition}
\newtheorem{defi}[thm]{Definition}
\newtheorem{example}[thm]{Example}

\newtheorem{remark}[thm]{Remark}

\numberwithin{equation}{section}
\numberwithin{thm}{section}

\newcommand{\BC}{\ensuremath{\mathbb {C}}\xspace}

\newcommand{{\BG}}{\ensuremath{\mathbb {G}}\xspace}

\newcommand{{\BK}}{\ensuremath{\mathbb {K}}\xspace}

\newcommand{\BR}{\ensuremath{\mathbb {R}}\xspace}

\newcommand{\CB}{\ensuremath{\mathcal {B}}\xspace}

\DeclareMathOperator{\Int}{Int}
\begin{document}

\title[Total positivity and symmetric spaces]{Total Positivity and Symmetric Spaces}
\author[Huanchen Bao]{Huanchen Bao}
\address{Department of Mathematics, National University of Singapore, Singapore.}
\email{huanchen@nus.edu.sg}
\begin{abstract}
We define a notion of total positivity for the symmetric space $G/K$ by taking the Hausdorff closure of the image of Lusztig's totally positive part $G_{>0}$ in $G/K$. We introduce double Bruhat cells for the symmetric space and define their totally positive pieces. We prove a cell decomposition of the totally nonnegative symmetric space, give explicit positive parametrizations of all cells, establish closure relations, and show that the transition maps between the two natural families of parametrizations are subtraction-free. 
\end{abstract}
\maketitle
\tableofcontents

\section{Introduction}

\subsection{} Total positivity began with the classical study of matrices whose minors are all nonnegative or positive. Lusztig \cite{LuTotalPositivity} extended this theory from matrices to arbitrary split connected reductive groups, using deep positivity properties coming from his theory of canonical bases. One of the key features of Lusztig's theory is that total positivity is not merely a subset of real points; it comes with a rich cell structure. For $G$ itself, the relevant cells are closely related to double Bruhat cells. Lusztig also defined totally nonnegative flag varieties and partial flag varieties. Marsh and Rietsch \cite{MarshRietschParametrizationsFlagVarieties} gave explicit positive parametrizations of the cells in the full flag variety using positive subexpressions and generalized Chamber Ansatz coordinates, while Rietsch \cite{RietschClosureRelationsPartialFlag} proved closure relations for totally nonnegative cells in partial flag varieties.  The resulting theory of total positivity has become a central object connecting algebraic groups, canonical bases, cluster structures, Poisson geometry, and combinatorics.

\subsection{} The purpose of this paper is to develop the theory of total positivity for symmetric spaces. 

Let $G$ be a split connected reductive group over $\mathbb{R}$, let $\theta$ be an involution of $G$, and let $K=G^\theta$ be the fixed-point subgroup. We study the symmetric space
\[
\mathcal S=G/K.
\]
In the Lusztig theory of total positivity, one needs to fix a pinning for the reductive group $G$, including Borel subgroups $B^\pm$. We further require $(B^+, T)$ to be a $\theta$-split pair in the sense of Springer \cite{SpringerAlgebraicGroupsWithInvolutions}. In particular, $\theta(B^+) = \dot{w}^\bullet B^+  \dot{w}^{\bullet,-1} $ for some Weyl group element $w^\bullet$.  The involution $\theta$ is further normalized using the $\imath$-root datum attached to the symmetric pair; the precise convention is recalled in \S\ref{subsec:algebraic-groups}.  One should take this as the pinning data for the symmetric space. 

Throughout this paper we restrict to the class of symmetric pairs for which this normalized involution is compatible with the diagram involution, equivalently the parameters in \S\ref{subsec:algebraic-groups} may be chosen so that $\overline\zeta_i=-1$. This condition excludes a few cases (see \S\ref{subsec:algebraic-groups} for the detailed
list), but it is the natural setting in which the involution is compatible with the positivity structure used below.

\subsection{} The central geometric idea of the paper is to introduce the analogue of double Bruhat cells for the symmetric space. We use the embedding
\[
\iota:G/K\longrightarrow G,\qquad gK\longmapsto g\theta(g)^{-1},
\]
which realizes $G/K$ as a closed subvariety of $G$. Then, for $u\leq w^\bullet$ in the Weyl group of $G$, we define
\[
\mathring {\mathcal S}_u
=
\iota^{-1}(B^+\dot u B^-\dot w^\bullet)= \iota^{-1}(B^+\dot u \dot w^\bullet \theta(B^-)).
\]
We call the strata  $\mathring {\mathcal S}_u$ the double Bruhat cells of the symmetric space. Note that $\mathring {\mathcal S}_u = \emptyset$ if $u \not \le w^\bullet$.

This terminology is not only formal. These strata are motivated by Poisson geometry studied by Evens and Lu \cite{EvensLu2006}: they are the $T$-leaves for the natural Poisson structure on the symmetric space. Thus they play for $G/K$ the role played by double Bruhat cells in $G$ and by projected Richardson strata in partial flag varieties. In the special case where the symmetric space is the group itself, this construction recovers the usual double Bruhat cells in $G$. This Poisson-geometric interpretation is one of the main reasons these strata are the correct pieces on which to study total positivity. The relevant Frobenius splitting  and cluster structure on the double Bruhat cell shall appear in forthcoming works. 

We now define the totally nonnegative symmetric space. Let
\[
\mathcal S_{\geq0}=(G/K)_{\geq0}:=\overline{G_{>0}K/K},
\]
where the closure is taken in the Hausdorff topology on the real locus of $G/K$. For each $u\leq w^\bullet$, define
\[
\mathcal S_{u}^{>0}:=\mathcal S_{\geq0}\cap\mathring {\mathcal S}_u,
\qquad
\mathcal S_{u}^{\geq0}:=\overline{\mathcal S_{u}^{>0}}.
\]
We also define the totally positive symmetric space by
\[
\mathcal S_{>0}:=\mathcal S_{e}^{>0}.
\]

The indexing set for these cells can be described in two equivalent ways. From the symmetric space side, the strata are indexed by
\[
Q=\{u\in W\mid u\leq w^\bullet\}.
\]
The second indexing set is motivated by partial flag varieties and the projected Richardson strata that appear there:
\[
P=\{(v,w)\mid v\in W_{I_\bullet},\ w\in W^{I_\bullet},\ v\leq w\}.
\]
The two descriptions are related by the map
\[
\phi:P\longrightarrow Q^{\mathrm{opp}},
\qquad
(v,w)\longmapsto v\theta(w^{-1})w^\bullet.
\]
The main combinatorial result of the paper is that this map is an isomorphism of posets; see Proposition~\ref{prop:poset-isomorphism}. Thus the symmetric space stratification is controlled by the same combinatorics that governs projected Richardson strata. The poset $Q$ has a natural involution via the inverse map. We denote the induced involution on $P$ by $\sigma$. 

\subsection{}
We can now state the main results. Let $(v,w)\in P$, choose a reduced expression $\mathbf w$ of $w$, and let $\mathbf v_+$ be the positive subexpression of $v$ in $\mathbf w$ in terms of Marsh and Rietsch \cite{MarshRietschParametrizationsFlagVarieties}. Let $G^{>0}_{\mathbf v_+,\mathbf w}$ denote the corresponding Marsh--Rietsch positive parameter set. We further define the positive set $H^{>0}_{\mathbf v_+,\mathbf w}$ as the positive-root analogue of the Marsh--Rietsch set $G^{>0}_{\mathbf v_+,\mathbf w}$; it is defined precisely in \S2.4 by applying the Chevalley involution to the Marsh--Rietsch parameter set. Let $\overline{T}=T/T^\theta$ be the quotient torus
and let $\overline{T}_{>0}$ be its totally positive part. 

\begin{thmintro}[Theorem~\ref{thm:main-positive-strata}]
\begin{enumerate}

\item The space $\mathcal S_{\geq0}$ is entirely contained in both the open $B^+$-orbit and the open $B^-$-orbit of $G/K$, that is, 
\[
 \mathcal S_{\geq0} \subset (B^-K/K) \cap (B^+K/K).
\]
\item  For every $(v,w)\in P$, there is an isomorphism of semi-algebraic varieties 
\[
G^{>0}_{\mathbf v_+,\mathbf w}\times\overline{T}_{>0}
\xrightarrow{\sim}
\mathcal S_{\phi(v,w)}^{>0}.
\]
\item  For every $(v,w)\in P$, there is an isomorphism of semi-algebraic varieties 
\[
H^{>0}_{\mathbf v_+,\mathbf w}\times\overline{T}_{>0}
\xrightarrow{\sim}
\mathcal S_{\phi(v,w)^{-1}}^{>0}.
\]
\item The space $\mathcal S_{\geq0}$ admits the cell decomposition
\[
\mathcal S_{\geq0}
=
\bigsqcup_{u\in Q}\mathcal S_{u}^{>0}.
\]
\item We have the closure relations
\[
\overline{\mathcal S_{u}^{>0}}
=
\bigsqcup_{u\leq u'}\mathcal S_{u'}^{>0}.
\]
\item Let $u \in Q$. The cell $\mathcal S_{u}^{>0}$ is a connected component of $\mathring{\mathcal S}_u( \mathbb R)$.
\item Let $(v,w)\in P$ and write $\sigma(v,w)=(v',w')$.  After identifying the quotient torus $\overline{T}_{>0}$ with positive real coordinates, the transition map
\[
\mathbb{R}_{>0}^{\dim \overline{T} + \ell(w) - \ell(v)} \rightarrow G^{>0}_{\mathbf v_+,\mathbf w}\times\overline{T}_{>0}
\rightarrow
\mathcal S_{\phi(v,w)}^{>0}
\rightarrow
H^{>0}_{\mathbf v'_+,\mathbf w'}\times\overline{T}_{>0} \rightarrow \mathbb{R}_{>0}^{\dim \overline{T} + \ell(w) - \ell(v)}
\]
is subtraction-free.
 \item The total nonnegative symmetric space $\mathcal S_{\ge 0}$ is contractible.  
\end{enumerate} 
\end{thmintro}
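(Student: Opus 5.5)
We work throughout in the image of the embedding $\iota$. Since $\iota$ is injective on $G/K$ with closed image, the whole problem can be transported into $G$: $\iota(\mathcal S_{\ge0})$ is the closure of $\{g\theta(g)^{-1} : g\in G_{>0}\}$.

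\textbf{Step 1: normal form for $\iota(g)$.} Take $g\in G_{>0}$ and factor it as $g=u^+tu^-$ with $u^\pm\in U^\pm_{>0}$ and $t\in T_{>0}$. Then
\[
\iota(g)=u^+\,t\theta(t)^{-1}\,u^-\theta(u^-)^{-1}\,\theta(u^+)^{-1}.
\]
The normalization $\overline\zeta_i=-1$ is used at this point. Combined with the $\theta$-split property $\theta(B^+)=\dot w^\bullet B^+\dot w^{\bullet,-1}$, it should give two facts:
\begin{itemize}
\item $\theta$ restricted to $U^-_{>0}$ equals $\dot w^\bullet$-conjugation composed with a pinning-preserving twist, up to the inverse and transpose map;
\item $\theta(U^-_{>0})^{-1}$ lies in the totally nonnegative part of $\dot w^\bullet U^{\pm}\dot w^{\bullet,-1}$.
\end{itemize}
From this we get that $\iota(g)\dot w^{\bullet,-1}$ lies in $G_{\ge0}$-type products such as $U^+_{\ge0}T_{>0}U^-_{\ge0}$ times $\dot w^\bullet$-twisted pieces.

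\textbf{Step 2: part (1), the open orbits.} The condition $gK\in B^\pm K/K$ is equivalent to $\iota(g)\in B^\pm\theta(B^\pm)^{-1}=B^\pm\dot w^\bullet B^\pm\cdots$. This is detected by nonvanishing of certain generalized minors $\Delta(\iota(g))$. We will show that these minors are positive on $G_{>0}$ (for instance via canonical basis positivity of the matrix coefficients) and that they stay bounded below on the closure. For the lower bound we use the standard trick: $\mathcal S_{\ge0}$ is stable under the action of $G_{\ge0}$, and in particular under $T_{>0}$ and the one-parameter subgroups $x_i(a)$, $y_i(a)$ with $a\ge0$. We then apply the Lusztig-type argument that a limit point leaving the open cell would contradict positivity after translating by elements of $G_{>0}$. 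This first half of the argument is also what gives Step 3.

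\textbf{Step 3: parts (2) and (3), parametrizations.} By part (1), every point of $\mathcal S_{\ge0}$ can be written uniquely as $bK$ with $b\in B^-$, and the choice of $b$ is modulo $B^-\cap K$. The quotient torus $\overline T$ appears naturally here. We then run the following sequence:
\begin{enumerate}
\item Show that the $U^-$-part of $b$ is totally nonnegative, giving $\mathcal S_{\ge0}\cong U^-_{\ge0}\times \overline T_{>0}$, with some reduction coming from $W_{I_\bullet}$.
\item Project to the partial flag variety $G/P_{I_\bullet}$, where the Rietsch cell decomposition into projected Richardson cells indexed by $P$ applies.
\item Use the Marsh--Rietsch parametrization $G^{>0}_{\mathbf v_+,\mathbf w}$ of those cells.
\item Compute $\iota$ on these parametrized elements and check that they land in $B^+\dot u B^-\dot w^\bullet$ with $u=\phi(v,w)$. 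This is a Bruhat-cell computation: $g\in B^-\dot vB^+\cap B^+\dot wB^+$ implies $g\theta(g)^{-1}\in B^+ v\theta(w^{-1})\cdots$.
\end{enumerate}
For part (3), apply the same construction with $B^+$ in place of $B^-$, using the Chevalley involution. This produces the sets $H^{>0}$ and the index $\phi(v,w)^{-1}$.

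\textbf{Step 4: parts (4)–(8).} Once the parametrizations are in hand, the remaining parts follow in order:
\begin{itemize}
\item \emph{Part (4).} The strata $\mathring{\mathcal S}_u$ are disjoint, and every point is parametrized, so the decomposition is exhaustive.
\item \emph{Part (5).} This is transported from the Rietsch closure relations via the poset isomorphism of Proposition~\ref{prop:poset-isomorphism}; it explains the order reversal $Q^{\rm opp}$.
\item \emph{Part (6).} Compare the dimension $\dim\overline T+\ell(w)-\ell(v)$ with $\dim\mathring{\mathcal S}_u$. Then show the cell is closed in $\mathring{\mathcal S}_u(\mathbb R)$, using part (5): its boundary lies in other strata. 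It is also open, being a semialgebraic subset of full dimension.
\item \emph{Part (7).} The two parametrizations are related through Chamber Ansatz minors evaluated on $\iota(g)$. Each such minor is a positive Laurent expression, so it suffices to show that each coordinate is a ratio of generalized minors that are subtraction-free in the other coordinates.
\item \emph{Part (8).} Use the contracting flow by a regular dominant cocharacter of $T_{>0}$ composed with scaling, as for $G_{\ge0}$.
\end{itemize}

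The main obstacle is the boundary control in Step 2: showing that limits of $G_{>0}K$ never escape $B^\pm K/K$, together with the subtraction-free transition in part (7).
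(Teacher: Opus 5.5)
Your outline of parts (2)--(5) follows the paper's route: Rietsch's decomposition of the big cell of $G/P^+_{I_\bullet}$, the Marsh--Rietsch parametrizations, a Bruhat-cell computation of $\iota$, and the poset isomorphism. The gap is that Step~2 does not prove part~(1), and everything downstream depends on it. Computing $\mathcal S_{\ge0}$ factorwise in $B^-K/K\cong U_{P^-_{I_\bullet}}\times\overline T$ is only legitimate once you know that no limit point of $G_{>0}K/K$ leaves $B^-K/K$. Neither mechanism you offer gives this.

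\begin{itemize}
\item \emph{Canonical-basis positivity.} It does not apply directly, because $\iota(g)=g\theta(g)^{-1}$ is in general not in $G_{\ge0}$: for $g\in U^-_{>0}$, $\theta(g)^{-1}$ lies in $\dot w_\bullet U^+_{>0}\dot w_\bullet^{-1}$.
\item \emph{Lower bound on minors.} Positivity on $G_{>0}K/K$ only gives $\ge0$ on the closure, and no single minor is bounded below there. Since $\iota(tgK)=t\,\iota(gK)\,\theta(t)^{-1}$, for $\mu=w^\bullet\lambda$ you get $\Delta^{\mu}_{\mu}(\iota(tgK))=(\mu-\theta\mu)(t)\,\Delta^{\mu}_{\mu}(\iota(gK))$. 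As $\mu-\theta\mu$ is a nontrivial character of $\overline T$, these values get arbitrarily close to $0$ along $T_{>0}$.
\item \emph{Translation by $G_{>0}$.} $G_{\ge0}$-stability of $\mathcal S_{\ge0}$ gives no contradiction for a boundary point outside $B^+K/K$; translating it just produces another point of $\mathcal S_{\ge0}$.
\end{itemize}
You flag this as the main obstacle yourself, but the proposal contains no idea that overcomes it.

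The paper supplies two missing ingredients. First, the rank-one exchange relations in $G/K$ of \S\ref{subsec:Rank-One}: the identity \eqref{eq:AI} in type AI$_1$, and $y_i(a)K=\dot w_\bullet^{-1}x_{\tau(i)}(a)K$ in the other real-rank-one types, which comes from the commutation of $U_{-\alpha_i}$ and $U_{\beta_i}$. Together with Lusztig's relations these give $B^-_{>0}K/K=B^+_{>0}K/K$ (Proposition~\ref{prop:positive-borels}). Second, the $\overline T$-invariant function $\Pi^\lambda_{w^\bullet}(gK)=\Delta^{w^\bullet\lambda}_{w^\bullet\lambda}(\iota(gK))\,\nabla^{-w^\bullet\lambda}_{-w^\bullet\lambda}(\iota(gK))$. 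Comparing a $B^-_{>0}$- and a $B^+_{>0}$-representative of the same point, the exchange relations show $\Pi^\lambda_{w^\bullet}\ge1$ on $G_{>0}K/K$, with each exchange contributing a factor $\ge1$ as in the proof of Theorem~\ref{thm:Lu4.3}. Springer's description of codimension-one orbits shows $\Pi^\lambda_{w^\bullet}=0$ off $B^+K/K$ and off $B^-K/K$, and continuity finishes the argument.

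The same exchange relations are the real content of part~(7). Your assertion that the Chamber Ansatz minors are positive Laurent expressions in the other coordinates is exactly what needs proof. The paper gets subtraction-freeness on the top cell from the rank-one formulas; in type AI$_1$ this is $(s,t)\mapsto(st^2/(1+s^2t^2),\,t/(1+s^2t^2))$ in $\overline T$-coordinates. For lower cells it embeds them into the top cell via the Bao--He subtraction-free bijections and lets the extra $U^+_{v,>0}$-coordinates go to the boundary (Galashin--Karp--Lam).

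Two smaller corrections:
\begin{itemize}
\item The first factor of $\mathcal S_{\ge0}$ is $(U^-P^+_{I_\bullet}/P^+_{I_\bullet})\cap(\mathcal P_{I_\bullet})_{\ge0}$, not (a quotient of) $U^-_{\ge0}$. The image of $U^-_{\ge0}$ there is only $\bigsqcup_{w}\Pi^{>0}_{e,w}$ and misses every cell with $v\neq e$.
\item In part~(6), a full-dimensional semialgebraic subset need not be open; you need invariance of domain for the injective parametrization.
\end{itemize}
Your torus-contraction argument for part~(8) is fine once the product decomposition is established.
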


\subsection{}
Lusztig \cite{LusztigTotalPositivitySymmetricSpaces} has already introduced a notion of total positivity for symmetric spaces. Our approach is different. First, our pinning is based on a split pair $(B^+, T)$, which is opposite to Lusztig's choice of a fundamental pair. This leads to different definitions of totally nonnegative symmetric spaces. Second, we use an entirely different stratification: the double Bruhat cell stratification motivated by Poisson geometry. This leads to a particularly simple cell decomposition, closure order, and subtraction-free transition functions. In particular, after passing to the quotient torus, the transition functions are subtraction-free without taking square roots. 

\subsection{}
The paper is organized as follows. In \S2 we recall the necessary background on symmetric spaces, the embedding $G/K\hookrightarrow G$, Lusztig total positivity, and Marsh--Rietsch parametrizations. In \S3.1 we prove the poset isomorphism between the symmetric space indexing set and the projected Richardson indexing set. In \S3.2 we define the totally nonnegative symmetric space and state the main theorem. In \S3.3 we carry out the rank-one calculations. In \S3.4 we introduce a special regular function used to prove that $\mathcal S_{\geq0}$ lies in the open $B^\pm K/K$ charts. This also leads to a new proof of the core geometric structure of $G_{\ge 0}$ bypassing Lusztig's theory of canonical bases.  Finally, \S3.5 proves the main theorem.

\vspace{.2cm}
\noindent {\bf Acknowledgment: } HB is supported by the MOE grant A-8003582-00-00.  
\section{Preliminaries}

\subsection{Algebraic groups, involutions, $\imath$-root data}
\label{subsec:algebraic-groups}

Let $G$ be a connected reductive algebraic group over $\BC$. We assume throughout that $G$ splits over $\BR$. Fix a split maximal torus $T\subset G$ and a Borel subgroup $B^+\subset G$ containing $T$. Let $B^-$ be the opposite Borel subgroup with respect to $T$. Write $X=X^\ast(T)$ and $Y=X_\ast(T)$ for the character and cocharacter lattices, equipped with their natural perfect pairing $\langle \cdot, \cdot \rangle$. Let $\Phi\subset X$ be the root system, let $\Phi^\vee\subset Y$ be the coroot system, and let $\Delta=\{\alpha_i\}_{i\in I}$ be the simple roots determined by $B^+$. We write $W$ for the Weyl group.

For later use we fix a pinning. For each simple root $\alpha_i$, let $x_i:\BG_a\to G$ and $y_i:\BG_a\to G$ denote the corresponding positive and negative simple root subgroup homomorphisms, and let $\alpha_i^\vee:\BG_m\to T$ be the simple coroot. We set
\[
\dot s_i=x_i(-1)y_i(1)x_i(-1)\in N_G(T),
\]
so that the image of $\dot s_i$ in $W$ is the simple reflection $s_i$. For any $w\in W$, after choosing a reduced expression $w=s_{i_1}\cdots s_{i_r}$, we write $\dot w=\dot s_{i_1}\cdots \dot s_{i_r}\in N_G(T)$. This is independent of the reduced expression. 

We further fix an $\imath$-root datum, in the sense of \cite{SongSymmetricSubgroupSchemes}*{\S 2.4.1} and \cite{BaoSongSymmetricSubgroupSchemesFrobenius}*{\S 2.4.2},
\[
(Y,X,\{\alpha_i^\vee\}_{i\in I},\{\alpha_i\}_{i\in I},\theta)  \text{ of type $(I=I_\bullet\sqcup I_\circ,\tau)$.}
\]
The induced involution on the weight lattice is characterized by
\[
\theta(\alpha_i)=-w_\bullet(\alpha_{\tau(i)})\qquad (i\in I).
\]

We denote by $L_{I_\bullet}$ the standard Levi subgroup corresponding to $I_\bullet$. Its Weyl group is $W_{I_\bullet}$, and $w_\bullet\in W_{I_\bullet}$ is its longest element. We also write $w_0\in W$ for the longest element of the full Weyl group.   Let $W^{I_\bullet}\subset W$ be the set of minimal-length representatives of $W/W_{I_\bullet}$. We write $w^\bullet = w_0 w_\bullet$. By definition of the $\imath$-root datum \cite{SongSymmetricSubgroupSchemes}*{\S 2.4.1} and \cite{BaoSongSymmetricSubgroupSchemesFrobenius}*{\S 2.4.2},  we have $w_0 w_\bullet = w_\bullet w_0$. Hence  $w^\bullet = w^{\bullet, -1}$.

We record automorphisms that will appear repeatedly. First, we write $\omega$ for the Chevalley involution attached to the chosen pinning; it is characterized by
\[
\omega(x_i(a))=y_i(a),\qquad \omega(y_i(a))=x_i(a),\qquad \omega(t)=t^{-1}\quad (t\in T).
\]
Second, after choosing representatives $\dot w_0,\dot w_\bullet\in N_G(T)$, we have $\Int(\dot w_0):G\to G$, $g\mapsto \dot w_0g\dot w_0^{-1}$, and $\Int(\dot w_\bullet):G\to G$, $g\mapsto \dot w_\bullet g\dot w_\bullet^{-1}$. 
Following Song's functorial construction of involutions on $G$ in \cite{SongSymmetricSubgroupSchemes}*{\S 3.3}, one introduces parameters $\overline\zeta_i \in \{\pm1\}$ such that
\[
\overline\zeta_i\overline\zeta_{\tau(i)}=(-1)^{\langle 2\rho_\bullet^\vee,\alpha_i\rangle}.
\]
Here $\rho_\bullet^\vee$ denotes the half-sum of the positive coroots in the root subsystem generated by $I_\bullet$.
In particular, we define the involution on $G$
\[
\theta=\Int(\dot w_\bullet)\circ\tau\circ\omega\circ\Xi(\overline\zeta),
\]
where $\Xi(\overline\zeta)$ denotes the torus automorphism attached to the parameters $\overline\zeta_i$. We use the same symbol $\theta$ for this group involution and for the induced involutions on $X$, $Y$, and $W$.
With this normalization, the involution $\theta$ is the identity on the derived subgroup of the Levi $L_{I_\bullet}$. One has the explicit formulas
\[
\theta\bigl(y_i(a)\bigr)=\dot w^{-1}_\bullet\,x_{\tau(i)}((-1)^{\langle 2 \rho_\bullet^\vee, \alpha_{\tau(i)} \rangle}  \overline{\zeta}_i^{-1} a) \,\dot w_\bullet= \dot w^{-1}_\bullet\,x_{\tau(i)}(   \overline{\zeta}_{\tau(i)} a)\,\dot w_\bullet,
\qquad i\in I_\circ,
\]
\[
\theta\bigl(x_i(a)\bigr)=\dot w_\bullet\,y_{\tau(i)}(\overline{\zeta}_i a)\,\dot w_\bullet^{-1},
\qquad i\in I_\circ.
\]
Applying the same formulas to the node $\tau(i)$, one also has
\[
\theta\bigl(y_{\tau(i)}(a)\bigr)
=\dot w_\bullet^{-1}\,x_i\left((-1)^{\langle 2\rho_\bullet^\vee,\alpha_i\rangle}\overline\zeta_{\tau(i)}^{-1}a\right)\,\dot w_\bullet
=\dot w_\bullet^{-1}\,x_i(\overline\zeta_i a)\,\dot w_\bullet,
\qquad i\in I_\circ,
\]
and
\[
\theta\bigl(x_{\tau(i)}(a)\bigr)
=\dot w_\bullet\,y_i(\overline\zeta_{\tau(i)}a)\,\dot w_\bullet^{-1},
\qquad i\in I_\circ.
\]

Let $K=G^\theta$ be the fixed-point subgroup. Let ${\mathcal  S}=G/K$ be the symmetric space. Following \cite{BaoSongCoordinateRingsSymmetricSpaces}, the variety ${\mathcal  S}$ is defined over $\mathbb Z$. The involution induces an involution on $W$, still denoted by $\theta$, such that $\theta(w) = w_\bullet \tau(w) w_\bullet^{-1}$.

 It follows from the construction that $(B^+,T)$ is a $\theta$-split pair in the sense of Springer \cite{SpringerAlgebraicGroupsWithInvolutions}. In particular, $B^\pm K/K$ is the open $B^\pm$-orbit in $\mathcal{S}$, and we have $\theta(B^\pm)=\dot w^\bullet  B^\pm \dot w^{\bullet,-1}$.

\vspace{.5cm}

\textbf{From now on, we assume $(-1)^{\langle 2\rho_\bullet^\vee,\alpha_i\rangle} = 1$  for all $i \in I$.} We then take $\overline\zeta_i=-1$ for every $i\in I$ in the definition of $\theta$.

As a consequence, the involution $\theta$ commutes with both $\omega$ and the diagram involution $\tau$.  This assumption excludes (irreducible) symmetric pairs of type AIII$_{n,p}$ with both $p$ and $n +1 -p$ being odd, type AIV$_n$ with $n$ being odd, and type EIII, in terms of the Satake diagrams \cite{WikipediaSatakeDiagram}. 

\begin{remark}
The choice of $\overline\zeta_i=-1$ rather than $+1$ is a matter of normalization/pinning. The essential condition is instead $\zeta_i = \zeta_{\tau(i)} \in \mathbb{R}$; this is the actual obstruction in the excluded cases. 
\end{remark}
\subsection{The embedding map}\label{sec:Su}

Define the map $\iota:G\to G$ by $\iota(g)=g\theta(g)^{-1}$.
Since $\theta(k)=k$ for every $k\in K$, one has $\iota(gk)=g\theta(g)^{-1}$. So $\iota$ is constant on right $K$-cosets. Therefore it descends to a morphism
\[
\iota:G/K\longrightarrow G.
\]

The twisted $G$-action on $G$ is given by $g\ast x=gx\theta(g)^{-1}$ for $g\in G$ and $x\in G$. Let $S_\theta=\{g\theta(g)^{-1}\mid g\in G\}\subset G$ be the twisted orbit of $e \in G$.
By Springer \cite{SpringerAlgebraicGroupsWithInvolutions}*{Proposition~2.2}, $S_\theta$ is a closed subvariety of $G$, and the induced map
\[
G/K\longrightarrow S_\theta,\qquad gK\longmapsto g\theta(g)^{-1}
\]
is an isomorphism of affine $G$-varieties. The $B^\pm$-orbits on $G/K$ may equivalently be viewed as the corresponding twisted $B^\pm$-orbits on $S_\theta$. By Springer and Richardson--Springer, there are only finitely many such orbits \cites{SpringerAlgebraicGroupsWithInvolutions,RichardsonSpringerBruhatOrderSymmetricVarieties}.

Let $\overline{T} = T/T^\theta$ be the quotient torus with the character lattice $\breve{X}= \{\mu - \theta(\mu) \vert \mu \in X\}$. Let $\breve{Y}$ be the cocharacter lattice of $\overline{T}$. Let $r = \dim \overline{T}$. By the local structure theorem \cite{DP}*{Proposition~3.8}, we have  $B^-K/K \cong U_{P_{I_\bullet}^-} \times \overline{T}$. In particular, $\dim \mathcal S = \ell(w^\bullet) +r$.

We next recall the stratification of $  {\mathcal  S}$ motivated from the Poisson structure. For $u\in W$, define
\[
\mathring {\mathcal  S}_u=\iota^{-1}(B^+\dot uB^-\dot w_0\dot w_\bullet)=\iota^{-1}(B^+\dot u\dot w_0\dot w_\bullet\theta(B^-)).
\]
We write ${\mathcal  S}_u=\overline{\mathring {\mathcal  S}_u}$ for the Zariski closure. 
\begin{definition}
We call the stratum $\mathring {\mathcal S}_u$ the (open) double Bruhat cell for the symmetric space $G/K$. 
\end{definition}
In the group case, equivalently for the symmetric space $(G\times G)/\Delta(G)$, this construction recovers the usual double Bruhat cells of $G$. This motivates the name.

\begin{remark}\label{rem:GG}
Let $\tilde{G}= G \times G$. We consider the involution \[
\tilde{\theta}: G \times G \rightarrow G \times G, \qquad (g,h) \mapsto (\theta(h), \theta(g)).
\]
Denote the fixed-point group by $\tilde{K} = \tilde{G}^{\tilde{\theta}}$. Then we consider the symmetric space $\tilde{G}/\tilde{K}$. It follows from \cite{EvensLu2006} that the intersections of diagonal $G$-orbits with $(B^+, B^-)$-orbits are precisely the $T$-leaves of the natural Poisson structure on $\tilde{G}/\tilde{K}$ (as a Poisson homogeneous space): see also \cite{Lu2014TLeaves}.   The map $G \times G \rightarrow G, (g,h) \mapsto g \theta(h)^{-1}$ induces an isomorphism $\tilde{G}/\tilde{K} \cong G$.  Under this isomorphism, diagonal $G$-orbits in $\tilde{G}/\tilde{K}$ are identified with the twisted conjugacy classes, and $(B^+, B^-)$-orbits are identified with $(B^+, \theta(B^-))$ double cosets in $G$.
Under this isomorphism $\mathcal S$ is precisely the diagonal $G$-orbit of $\tilde{K}$. This leads to the stratum $\mathring {\mathcal  S}_u$.
\end{remark}

\begin{lemma}\label{le:Su}
    \begin{enumerate}
    \item $\mathring {\mathcal  S}_u\neq\emptyset$ if and only if $u\leq w^\bullet$ in the Bruhat order.
    \item ${\mathcal  S}_u=\bigsqcup_{u\leq v\leq w^\bullet}\mathring {\mathcal  S}_v.$
    \item For $u\leq w^\bullet$, $\mathring {\mathcal  S}_u\subset {\mathcal  S}$ is a smooth and connected subvariety of dimension  $\ell(w^\bullet)- \ell(u) +r $.
    \end{enumerate}
\end{lemma}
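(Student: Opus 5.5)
Throughout I write $\mathring{\mathcal S}_u=\iota^{-1}(B^+\dot uB^-\dot w^\bullet)$, using $\dot w_0\dot w_\bullet=\dot w^\bullet$ up to torus factors. The plan is to reduce everything to the geometry of $B^+\times B^-$ double cosets intersected with $\mathcal S$. The engine is Remark~\ref{rem:GG}: in the group case these are exactly the double Bruhat cells. For general $\theta$, I view $\mathcal S\cong S_\theta$ as a single twisted $G$-orbit inside $G$, and consider its intersection with the $(B^+,\theta(B^-))$-double coset $B^+\dot u\dot w^\bullet\theta(B^-)$.

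\textbf{Step 1: smoothness and dimension (part (3)).} The group $B^+\times B^-$ acts on $G$ by $(b,b')\cdot x=bx\theta(b')^{-1}$, and the double coset $C_u:=B^+\dot u\dot w^\bullet\theta(B^-)$ is one orbit. The twisted $G$-orbit $S_\theta$ is stable under the diagonal subgroup $\{(b,b)\mid b\in B^+\cap B^-=T\}$, so $T$ acts on $\mathring{\mathcal S}_u$. For transversality I would follow Evens--Lu \cite{EvensLu2006}: the $T$-leaves of a Poisson homogeneous space are smooth. More concretely, I would show that at $x\in\mathring{\mathcal S}_u$,
\[
T_xS_\theta+T_xC_u=T_xG.
\]
This follows from $\mathfrak g=\mathfrak b^++\theta(\mathfrak b^-)$ up to $\mathfrak t$ once one translates tangent spaces to $\mathfrak g$. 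Indeed, $T_xS_\theta=\{X-\mathrm{Ad}_x\theta(X)\}$, and $T_xC_u$ contains $\mathfrak b^+ x+x\theta(\mathfrak b^-)$. The main obstacle is to verify that this sum really is all of $\mathfrak g$, with the $\mathfrak t$-part handled by the torus. Given transversality, the codimension of $\mathring{\mathcal S}_u$ in $\mathcal S$ equals the codimension of $C_u$ in $G$, namely $\ell(u)$, since $\dim C_u=\dim G-\ell(u)$ when $u\le w^\bullet$. This gives dimension $\ell(w^\bullet)-\ell(u)+r$, using $\dim\mathcal S=\ell(w^\bullet)+r$.

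\textbf{Step 2: nonemptiness (part (1)).} Since $\iota(gK)=g\theta(g)^{-1}$, we have $\iota(B^-K/K)\subset B^-\theta(B^-)=B^-\dot w^\bullet B^-\dot w^{\bullet,-1}$. Hence the open chart $B^-K/K$ maps into $B^+B^-\dot w^\bullet$ only after analysis, so I would instead argue through closures. For $u\not\le w^\bullet$, I would show $S_\theta\subset\overline{B^-\dot w^\bullet B^-}\dot w^\bullet$, using $S_\theta=\overline{\iota(B^-K)}$ and $\iota(B^-)\subset B^-\theta(B^-)$. Every element of this set lies in $B^+\dot uB^-\dot w^\bullet$ only for $u\le w^\bullet$, by the standard fact that $B^-\dot yB^-$ meets $B^+\dot u B^-$ only if $u\le y$; this gives emptiness. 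For $u\le w^\bullet$, I would exhibit points directly. Take $g=\dot x\, t$ with $x\le w^\bullet$ a suitable element, or a product of $y_i(a)$'s. The element $\iota(g)$ then becomes an explicit element such as $\dot u\dot w^\bullet$ times a torus element, using $\theta(\dot w^\bullet)\in\dot w^\bullet T$. Alternatively, I would use that the images of simple root subgroups give rank-one cells and induct on $\ell(u)$.

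\textbf{Step 3: closure and connectedness (parts (2) and (3)).} The closure of $C_u$ in $G$ is $\bigsqcup_{u\le v}C_v$. By transversality, $\overline{\mathring{\mathcal S}_u}=\mathcal S\cap\overline{C_u}$ locally at every point. This is because a transversal intersection of an orbit closure is the closure of the intersection when every stratum $\mathcal S\cap C_v$ has the expected dimension, which Step 1 provides for all $v$. Hence $\mathcal S_u=\bigsqcup_{u\le v\le w^\bullet}\mathring{\mathcal S}_v$. For connectedness, I would realise $\mathring{\mathcal S}_u$ as isomorphic to a product of $\overline T$ with a reduced double Bruhat-type cell. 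Concretely, I would fix the open chart $B^-K/K\cong U_{P^-_{I_\bullet}}\times\overline T$ and show that $\mathring{\mathcal S}_u$ lies in it. There it is cut out, inside an open Bruhat cell, as the complement of finitely many generalized-minor divisors, hence is irreducible.

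I expect the tangent-space surjectivity in Step 1 to be the hardest step. To attack it I would use that $\mathcal S$ is a $T$-leaf of the Evens--Lu Poisson structure.
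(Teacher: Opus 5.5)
Your overall mechanism, transversality of $\mathcal S$ with the double cosets $C_u=B^+\dot u\dot w^\bullet\theta(B^-)$, is the one the paper uses. The paper quotes \cite{Ri92} for the pair $\Delta G$, $B^+\times B^-$ in $G\times G$; cf.\ Remark~\ref{rem:GG}. Your proof of the ``only if'' half of (1) is correct and more elementary than the paper's appeal to \cite{CLT10}; it goes via $\iota(B^-)\subset B^-\dot w^\bullet B^-\dot w^{\bullet,-1}$ and the implication $B^-\dot yB^-\cap B^+\dot uB^-\neq\emptyset\Rightarrow u\le y$.

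Step~1 is easier than you fear. For $V\in\mathfrak g$ write $V=V_++V_-$ with $V_+\in\mathfrak b^+$ and $V_-\in\mathfrak n^-$; then $V=V_++(V_--\mathrm{Ad}_x\theta V_-)+\mathrm{Ad}_x\theta V_-$. The input is $\mathfrak b^++\mathfrak b^-=\mathfrak g$. The identity $\mathfrak g=\mathfrak b^++\theta(\mathfrak b^-)$ that you quote is false when $w^\bullet\neq e$.

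The first genuine gap is the ``if'' half of (1). Your points $g=\dot xt$ satisfy $\iota(g)\in N_G(T)$, which lies in the single stratum indexed by $x\theta(x)^{-1}w^\bullet$. For split $\theta$, which acts trivially on $W$, all such points lie in $\mathring{\mathcal S}_{w^\bullet}$, so no other stratum is reached. The alternative induction on $\ell(u)$ is never carried out. What closes this within your framework is local openness. The map $m\colon(B^+\times B^-)\times\mathcal S\to G$, $(b,b',s)\mapsto bs\theta(b')^{-1}$, is a submersion at every $(1,1,s)$ by the computation above. Hence the image of any neighbourhood of $(1,1,s)$ contains a neighbourhood of $s$ in $G$. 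We have $\iota(eK)=e\in B^+\theta(B^-)=B^+\dot w^\bullet B^-\dot w^\bullet$, and $C_{w^\bullet}\subset\overline{C_u}$ for all $u\le w^\bullet$. Together these force $\mathcal S\cap C_u\neq\emptyset$. The same openness, applied at points of $\mathcal S\cap C_v$ with $v\ge u$, is the actual reason for (2). A dimension count by itself does not yield closure relations.

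The second gap is connectedness in (3). Your argument rests on $\mathring{\mathcal S}_u\subset B^-K/K$, which is false for the complex strata. Take $G=SL_2$ with $\theta(g)=(g^T)^{-1}$ (type AI$_1$), so $\iota(gK)=gg^T$. Then $\iota(B^-K/K)$ is the set of symmetric determinant-one matrices with nonzero $(1,1)$-entry. By contrast, $\iota(\mathring{\mathcal S}_e)$ is the set of those with nonzero off-diagonal entry. The matrix $\left(\begin{smallmatrix}0&i\\ i&0\end{smallmatrix}\right)$ lies in the second set but not the first. The containment does hold for $\mathcal S_{\geq0}$ (that is Theorem~\ref{thm::(1)}), but not for $\mathring{\mathcal S}_u$.

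Moreover, for $u\neq e$ the stratum has codimension $\ell(u)$ in $\mathcal S$. So it cannot be a complement of divisors in an open cell of $\mathcal S$. The ``reduced double Bruhat-type cell'' you invoke is also never constructed. Irreducibility of $\mathring{\mathcal S}_u$ therefore needs a separate input. The paper does not use a chart here; it deduces (3) from Richardson's results \cite{Ri92}.
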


\begin{proof}
We prove Part (1).  Let $\theta'=\Int(\dot w^\bullet)\theta$. Then $\theta'$ fixes $B^+$ and $B^-$. Let $\mathcal C$ be the $\theta'$-twisted conjugacy class of $(\dot w^\bullet)^{-1}$. Then $\iota(G/K)=\mathcal C\cdot(\dot w^\bullet)^{-1}$. Since $B^+K/K\subset G/K$ is dense, $B^+  (\dot w^\bullet)^{-1} B^+\cap\mathcal C$ is dense in $\mathcal C$. By \cite{CLT10}*{\S 2.4},
\[
\mathcal C\cap B^+\dot uB^-=(\iota(G/K)\cap B^+\dot uB^-\dot w^\bullet)(\dot w^{\bullet})^{-1}\neq\emptyset
\]
if and only if $u\leq w^\bullet = w^{\bullet,-1}$.

We prove Part (2). We follow the notations in Remark~\ref{rem:GG}.  
By the Bruhat decomposition, we have 
\[
\overline{B^+\dot uB^-\dot w^\bullet}=\bigsqcup_{u\leq v}B^+\dot vB^-\dot w^\bullet.
\]
We can equivalently consider the closure relations in $\tilde{G}/\tilde{K}$, since $\mathcal S$ is closed in $G$. Then we can apply results in \cite{Ri92}. In particular, by \cite{Ri92}*{Theorem~2.1}, we have
\[
{\mathcal S}_u=\overline{{\mathcal S}\cap\iota^{-1}(B^+\dot uB^-\dot w^\bullet)}={\mathcal S} \cap\iota^{-1}(\overline{B^+\dot uB^-\dot w^\bullet})=\bigsqcup_{u\leq v\leq   w^\bullet}\mathring {{\mathcal S}}_v.
\]
The finiteness condition in \cite{Ri92}*{Theorem~2.1} is not needed here, since ${\mathcal S}$ is closed. Finally, Part (3) follows from \cite{Ri92}*{Corollary~1.5}. 
\end{proof}

\subsection{Lusztig's total positivity}\label{sec:Lutp}

We recall Lusztig's theory of total positivity for a split reductive group with the fixed pinning above  \cite{LuTotalPositivity}*{\S\S 2.2--2.12}. Let $T_{>0}:=\langle \chi(a)\mid \chi \in Y,\ a\in\mathbb R_{>0}\rangle\subset T(\mathbb R)$ be the submonoid (with $1$) generated by $\chi(a)$ for various $\chi \in Y,\ a\in\mathbb R_{>0}$.
For a reduced expression $w=s_{i_1}\cdots s_{i_r}$, set
\[
U^+_{w,>0}=\{x_{i_1}(a_1)\cdots x_{i_r}(a_r)\mid a_1,\ldots,a_r \in \mathbb R_{>0}\}
\]
and
\[
U^-_{w,>0}=\{y_{i_1}(a_1)\cdots y_{i_r}(a_r)\mid a_1,\ldots,a_r\in \mathbb R_{>0}\}.
\]
These subsets are independent of the chosen reduced expression. We write $U^+_{>0}:=U^+_{w_0,>0}$ and $U^-_{>0}:=U^-_{w_0,>0}$. We also write $B^-_{>0} =U^-_{>0} T_{>0} $ and $B^+_{>0} =U^+_{>0} T_{>0}$.

The totally positive part of $G$ is
\[
G_{>0}:=U^+_{>0}T_{>0}U^-_{>0}=U^-_{>0}T_{>0}U^+_{>0}, 
\]
and the totally nonnegative part $G_{\geq0}$ is the submonoid of $G(\mathbb R)$ generated by $T_{>0}$ together with all $x_i(a)$ and $y_i(a)$ for $i\in I$ and $a\in\mathbb R_{\geq0}$.  

We shall frequently use the following elementary exchange relations associated with the pinning \cite{LuTotalPositivity}*{\S\S 1.1--1.3}. For $t\in T$, one has
\[
tx_i(a)=x_i(\alpha_i(t)a)t,\qquad ty_i(a)=y_i(\alpha_i(t)^{-1}a)t.
\]
For distinct simple roots $i\neq j$, the simple opposite root subgroups commute:
\[
y_i(a)x_j(c)=x_j(c)y_i(a).
\]
Finally, in rank one one has the relation
\begin{equation}\label{eq:Luxy}
y_i(a)\alpha_i^\vee(b^{-1})x_i(c)=x_i\!\left(\frac{c}{ac+b^2}\right)\alpha_i^\vee\!\left(\frac{b}{ac+b^2}\right)y_i\!\left(\frac{a}{ac+b^2}\right),
\end{equation}
whenever the displayed expressions are defined. In particular, if $a,b,c>0$, then $ac+b^2>0$, and all three transformed parameters are again positive. 



Recall the Demazure product $\ast$ \cite{BaoHe2022TwistedProduct}*{\S2.6} on the set $W$, determined by the following two rules: 
\begin{itemize}
\item $x \ast y = xy$ if  $x, y \in W$   such that $\ell(xy) = \ell(x) + \ell(y)$;
\item $s_i \ast w = w$ if $i \in I$, $w \in W$ such that $ s_i w < w$.
\end{itemize} 

By direct calculation, the multiplication map $U_{s_i, >0}^- \times U_{s_i, >0}^- \rightarrow U_{s_i, >0}^-$ is surjective. We conclude that 

{\it (a) The multiplication map $U_{s, >0}^- \times U_{t, >0}^- \rightarrow U_{s \ast t, >0}^-$ is surjective for $s, t \in W$.}

\subsection{Total positivity on flag varieties}\label{sec:TPFlag}

We recall the totally nonnegative flag varieties and the Marsh--Rietsch parametrizations. Let $\mathcal B$ denote the flag variety of $G$, and identify $\mathcal B\cong G/B^+$. Its totally nonnegative part is
\[
\mathcal B_{\geq0}=(G/B^+)_{\geq0}:=\overline{G_{>0}B^+/B^+}.
\]
For $v,w\in W$ with $v\leq w$, one considers the Richardson stratum
\[
R_{v,w}=B^+\dot wB^+/B^+\cap B^-\dot vB^+/B^+.
\]
The totally positive Richardson stratum is
\[
R^{>0}_{v,w}:=R_{v,w}\cap(G/B^+)_{\geq0}.
\]
Marsh--Rietsch prove that $R^{>0}_{v,w}$ is a positive cell, explicitly parametrized by the positive Marsh--Rietsch coordinates \cite{MarshRietschParametrizationsFlagVarieties}*{\S 11}, which we shall recall. 

Fix a reduced expression $\mathbf w=s_{i_1}\ldots s_{i_n}$ for $w$, and let $\mathbf v_+=(v_{(0)},v_{(1)},\ldots,v_{(n)})$ be the positive subexpression of $v$ in $\mathbf w$ in the sense of \cite{MarshRietschParametrizationsFlagVarieties}*{Definition~3.4}; its existence and uniqueness are given in \cite{MarshRietschParametrizationsFlagVarieties}*{Lemma~3.5}. Write
\[
J^+_{\mathbf v_+}=\{k\mid v_{(k)}=v_{(k-1)}s_{i_k}\},\qquad J^\circ_{\mathbf v_+}=\{k\mid v_{(k)}=v_{(k-1)}\}.
\]
The Marsh--Rietsch positive set \cite{MarshRietschParametrizationsFlagVarieties}*{Definition~11.2} is
\[
G^{>0}_{\mathbf v_+,\mathbf w}=\left\{g=g_1\cdots g_n\ \middle|\ g_k=\begin{cases}\dot s_{i_k},&k\in J^+_{\mathbf v_+},\\ y_{i_k}(t_k),&k\in J^\circ_{\mathbf v_+},\ t_k>0.\end{cases}\right\} \subset U^- \dot{v} \cap B^+ \dot{w}B^+.
\]
By \cite{MarshRietschParametrizationsFlagVarieties}*{Theorem~11.3}, the map
\[
G^{>0}_{\mathbf v_+,\mathbf w}\longrightarrow R^{>0}_{v,w},\qquad g\longmapsto gB^+
\]
is an isomorphism of real semi-algebraic varieties.

We also need the corresponding positive-root version. Recall Chevalley involution $\omega$ from Section~\ref{subsec:algebraic-groups}.
It sends our chosen representative $\dot s_i$ to $\dot s_i^{-1}$. Therefore the positive-root set below is precisely the image of the Marsh--Rietsch positive set under $\omega$:
\[
H^{>0}_{\mathbf v_+,\mathbf w}:=\omega(G^{>0}_{\mathbf v_+,\mathbf w}) = \left\{h=h_1\cdots h_n\ \middle|\ h_k=\begin{cases}\dot s_{i_k}^{-1},&k\in J^+_{\mathbf v_+},\\ x_{i_k}(t_k),&k\in J^\circ_{\mathbf v_+},\ t_k>0.\end{cases}\right\}.
\]

The subset $H^{>0}_{\mathbf v_+,\mathbf w}$ gives the Marsh--Rietsch parametrization when one chooses $B^-$, rather than $B^+$, as the base point of $\CB$. Recall Lusztig's theorem  \cite{LuTotalPositivity}*{Theorem~8.7} that the totally nonnegative part of the flag variety is invariant under the involution $\omega$.

\subsection{Total positivity on partial flag varieties}\label{sec:partial-flag-tnn}
We now pass from the full flag variety to a partial flag variety. Let $P^+_{I_\bullet}\supset B^+$ be the standard parabolic subgroup corresponding to $I_\bullet$, and let $\mathcal P_{I_\bullet}=G/P^+_{I_\bullet}$ be the corresponding partial flag variety. Denote by $\pi_{I_\bullet}:G/B^+\to G/P^+_{I_\bullet}$ the natural projection. The totally nonnegative part of the partial flag variety is defined by
\[
(\mathcal P_{I_\bullet})_{\geq0}:=\pi_{I_\bullet}((G/B^+)_{\geq0}).
\]
For $(v,w)\in W_{I_\bullet}\times W^{I_\bullet}$ with $v\leq w$, we write
\[
\Pi^{>0}_{v,w}:=\pi_{I_\bullet}(R^{>0}_{v,w})\subset(\mathcal P_{I_\bullet})_{\geq0}.
\]
These are the totally nonnegative projected Richardson varieties; the corresponding cell decompositions and closure relations for partial flag varieties are studied by Rietsch \cite{RietschClosureRelationsPartialFlag}. 

Let $P=\{(v,w) \vert v\in W_{I_\bullet}, w\in W^{I_\bullet}, v\le w\}$. We equip $P$ with the partial order $\preceq$ defined by declaring $(v,w)\preceq(v',w')$ if and only if there exists $z\in W_{I_\bullet}$ such that $v\le v'z\le w'z\le w$. The following claims can be found in \cite{RietschClosureRelationsPartialFlag}.

{\it (a) We have $(U^-P^+_{I_\bullet}/P^+_{I_\bullet})\cap(\mathcal P_{I_\bullet})_{\geq0}=\displaystyle\sqcup_{\substack{(v,w)\in P}}\Pi^{>0}_{v,w}$.}

{\it (b) For $(v,w) \in P$, we have $\overline{\Pi^{>0}_{v,w}} \cap (U^-P^+_{I_\bullet}/P^+_{I_\bullet}) = \sqcup_{(v',w') \in P, (v',w') \preceq (v,w) } \Pi^{>0}_{v',w'}$.}

{\it (c) For $(v,w) \in P$, the map $G^{>0}_{\mathbf v_+,\mathbf w}\longrightarrow \Pi^{>0}_{v,w}$, $g\longmapsto gB^+$
is an isomorphism of real semi-algebraic varieties.}

Recall that $U^-P^+_{I_\bullet}/P^+_{I_\bullet} \cong U_{P^-_{I_{\bullet}}}$, where $P^-_{I_{\bullet}} = \omega(P^+_{I_{\bullet}})$ and $U_{P^-_{I_{\bullet}}}$ denotes the unipotent radical of $P^-_{I_{\bullet}}$. So one could interpret the claims as the total positivity structure on $U_{P^-_{I_{\bullet}}}$.

\begin{lem}\label{lem:contractible}
The space  $(U^-P^+_{I_\bullet}/P^+_{I_\bullet})\cap(\mathcal P_{I_\bullet})_{\geq0}$ is contractible.
\end{lem}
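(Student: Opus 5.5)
The plan is to use a contracting torus action. Choose a regular dominant cocharacter $\lambda\in Y$, for instance $\lambda=\rho^\vee$ (or $2\rho^\vee$ if $\rho^\vee\notin Y$), so that $\langle\lambda,\alpha_i\rangle>0$ for all $i\in I$. For $s\in\mathbb R_{>0}$ we have $\lambda(s)\in T_{>0}$. Left multiplication by $\lambda(s)$ preserves $(G/B^+)_{\geq0}$, since $G_{\geq 0}$ is a monoid containing $T_{>0}$ and $T_{>0}G_{>0}\subset G_{>0}$. It also preserves $U^-P^+_{I_\bullet}/P^+_{I_\bullet}$ and commutes with $\pi_{I_\bullet}$. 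Hence it preserves the space $Z:=(U^-P^+_{I_\bullet}/P^+_{I_\bullet})\cap(\mathcal P_{I_\bullet})_{\geq0}$.

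Next, identify $U^-P^+_{I_\bullet}/P^+_{I_\bullet}\cong U_{P^-_{I_\bullet}}$ via $u\mapsto uP^+_{I_\bullet}$. Under this identification the action becomes conjugation: $\lambda(s)uP^+_{I_\bullet}=\lambda(s)u\lambda(s)^{-1}P^+_{I_\bullet}$. Choose root coordinates on $U_{P^-_{I_\bullet}}$ (as a variety it is isomorphic to the product of its root subgroups $U_{-\beta}$). Conjugation by $\lambda(s)$ scales the $U_{-\beta}$-coordinate by $s^{-\langle\lambda,\beta\rangle}$ with $\langle\lambda,\beta\rangle>0$. Setting $s=1/t$, each coordinate is multiplied by $t^{\langle\lambda,\beta\rangle}$. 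Define
\[
H:Z\times[0,1]\to Z,\qquad H(z,t)=\lambda(1/t)\cdot z \ (t>0),\quad H(z,0)=eP^+_{I_\bullet}.
\]
In the coordinates this is the map $(c_\beta)\mapsto(t^{\langle\lambda,\beta\rangle}c_\beta)$, which is jointly continuous on $U_{P^-_{I_\bullet}}\times[0,1]$ and tends to the identity element as $t\to 0$, uniformly on compact sets.

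It remains to check that $H$ takes values in $Z$. For $t>0$ this holds by the invariance above. For $t=0$ we need $eP^+_{I_\bullet}\in Z$. This follows from (a), since $\Pi^{>0}_{e,e}=\{eP^+_{I_\bullet}\}$. Alternatively, $eB^+$ is a limit of $x\,B^+$ with $x\in U^-_{>0}\subset$ closure of $G_{>0}$, so $eB^+\in(G/B^+)_{\geq0}$. Thus $H$ is a homotopy from $\mathrm{id}_Z$ to the constant map at the base point, and $Z$ is contractible.

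The main point needing care is the continuity at $t=0$ together with the well-definedness of the identification. Continuity is immediate because the action is linear-diagonal with strictly positive exponents in the root coordinates on the affine space $U_{P^-_{I_\bullet}}$. The positivity of the exponents holds because every root of $U_{P^-_{I_\bullet}}$ is negative and $\lambda$ is regular dominant. No finer cell structure from (b) or (c) is needed.
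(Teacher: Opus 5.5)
Your proof is correct, and it takes a more elementary and self-contained route than the paper. The paper gives two proofs. The first cites (the proof of) Bao--He's Proposition~5.6, that the space is a cone over the link of $(\mathcal P_{I_\bullet})_{\geq0}$ at $eP^+_{I_\bullet}$. The second follows Lusztig: translating by $g\in G_{>0}$ pushes every point into the top cell $\Pi^{>0}_{e,w^\bullet}\cong\mathbb R_{>0}^{\ell(w^\bullet)}$, so the identity is homotopic to a map factoring through a contractible space.

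Your contracting flow by a regular dominant cocharacter is essentially the mechanism behind the first route. You use only what contractibility needs:
\begin{itemize}
\item the $T_{>0}$-stability of $Z$;
\item the diagonal action with strictly positive weights in root coordinates on the big cell $U_{P^-_{I_\bullet}}$, so the flow extends continuously to $t=0$;
\item the fact that $eP^+_{I_\bullet}\in Z$.
\end{itemize}
This lets you avoid constructing the link, which requires checking that each nontrivial $\mathbb R_{>0}$-orbit meets a chosen sphere exactly once. It also avoids the deeper inputs of the second route, namely that $G_{>0}$ maps the nonnegative part into the top cell and that this cell is a positive orthant (Lusztig, Marsh--Rietsch, Rietsch).

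What the paper's first route buys is a stronger statement: a cone structure with compact link, which carries local topological information of the kind used for regularity results. For this lemma, contractibility is all that is needed. Since $eP^+_{I_\bullet}$ is $T$-fixed, your homotopy is in fact a strong deformation retraction onto that point.
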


\begin{proof}
By (the proof of) \cite{BaoHe2024Product}*{Proposition~5.6}, the space $(U^-P^+_{I_\bullet}/P^+_{I_\bullet})\cap(\mathcal P_{I_\bullet})_{\geq0}$ is a cone over the link of $(\mathcal P_{I_\bullet})_{\geq0}$ at $e P^+_{I_\bullet}$. Hence it is contractible by \cite{BaoHe2024Product}*{\S5.4.2}.

Alternatively,  we can apply the same argument as \cite{Lusztig1998}*{\S4}. For any $hP^+_{I_\bullet} \in (U^-P^+_{I_\bullet}/P^+_{I_\bullet})\cap(\mathcal P_{I_\bullet})_{\geq0}$ and any $g \in G_{>0}$, we have $ghP^+_{I_\bullet}  \in \Pi_{e, w^\bullet}^{>0}$ by direct computation using the Marsh-Rietsch parametrization. Therefore $(U^-P^+_{I_\bullet}/P^+_{I_\bullet})\cap(\mathcal P_{I_\bullet})_{\geq0}$ is homotopic to $\Pi_{e, w^\bullet}^{>0} \cong \mathbb{R}_{>0}^\ell(w^\bullet)$, and hence contractible.
\end{proof}

\section{Total Positivity in Symmetric Spaces}

\subsection{Two posets attached to the symmetric space}

We recall the following proposition from \cite{BaoHe2021}*{Proposition~4.6}. Since $W_{I_\bullet}$ is finite, the statements are simplified by \cite{BaoHe2021}*{Remark~2.2}.
\begin{proposition}\label{prop:BH}
Let $x,x'\in W_{I_\bullet}$ and $y,y'\in W^{I_\bullet}$. The following conditions are equivalent:
\begin{enumerate}
\item $w_\bullet x'(y')^{-1}\leq w_\bullet xy^{-1}$;
\item There exists $u\in W_{I_\bullet}$ such that $x\leq x'u$ and $y'u\leq y$.
\end{enumerate}
\end{proposition}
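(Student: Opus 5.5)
We reduce the statement to a general fact about parabolic factorizations of the Bruhat order, and then use that left multiplication by $w_\bullet$ reverses the Bruhat order on $W_{I_\bullet}$. Write ${}^{I_\bullet}W=(W^{I_\bullet})^{-1}$ for the minimal length representatives of $W_{I_\bullet}\backslash W$. For $x\in W_{I_\bullet}$ and $y\in W^{I_\bullet}$, the element $w_\bullet x\in W_{I_\bullet}$ and $y^{-1}\in {}^{I_\bullet}W$. The key lemma is:

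\emph{For $a,a'\in W_{I_\bullet}$ and $b,b'\in {}^{I_\bullet}W$, we have $ab\le a'b'$ if and only if there exists $u\in W_{I_\bullet}$ with $au\le a'$ and $u^{-1}b\le b'$.}

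Here $u^{-1}b$ need not be a length-additive product.

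For the direction ($\Leftarrow$), we use two standard properties of the Demazure product $\ast$ recalled in \S\ref{sec:Lutp}. First, $zz'\le z\ast z'$ for all $z,z'$. Second, $\ast$ is monotone in each argument for the Bruhat order. These give
\[
ab=(au)(u^{-1}b)\le (au)\ast(u^{-1}b)\le a'\ast b'=a'b'.
\]
The last equality holds because $\ell(a'b')=\ell(a')+\ell(b')$, since $b'\in{}^{I_\bullet}W$.

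For the direction ($\Rightarrow$), fix reduced words for $a'$ and $b'$. Their concatenation is a reduced word for $a'b'$. By the subword property, $ab$ equals the product of a subword, so $ab=a_1b_1$. Here $a_1$ is given by a subword of the $a'$-part, so $a_1\in W_{I_\bullet}$ and $a_1\le a'$. Likewise $b_1$ is given by a subword of the $b'$-part, so $b_1\le b'$. Put $u=a^{-1}a_1\in W_{I_\bullet}$. Then $au=a_1\le a'$, and $u^{-1}b=a_1^{-1}ab=b_1\le b'$.

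Now we apply the lemma with
\[
a=w_\bullet x',\quad b=(y')^{-1},\quad a'=w_\bullet x,\quad b'=y^{-1}.
\]
Condition (1) becomes the existence of $u\in W_{I_\bullet}$ with $w_\bullet x'u\le w_\bullet x$ and $u^{-1}(y')^{-1}\le y^{-1}$. On the finite Coxeter group $W_{I_\bullet}$, the map $z\mapsto w_\bullet z$ is an order-reversing bijection for the Bruhat order. Hence $w_\bullet x'u\le w_\bullet x$ is equivalent to $x\le x'u$. Inversion preserves the Bruhat order, so $u^{-1}(y')^{-1}\le y^{-1}$ is equivalent to $y'u\le y$. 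This is exactly condition (2).

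The main point needing care is the ($\Leftarrow$) direction of the lemma. There one must invoke the monotonicity of the Demazure product, and check that $a'\ast b'=a'b'$ because the factorization is length-additive. The subword extraction in ($\Rightarrow$) is routine once the reduced word of $a'b'$ is taken as a concatenation.
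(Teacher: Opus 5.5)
Your argument is correct. The paper gives no proof of its own here: it imports the statement from \cite{BaoHe2021}*{Proposition~4.6} and uses the finiteness of $W_{I_\bullet}$, via \cite{BaoHe2021}*{Remark~2.2}, to put it in the stated form. So your proof is a self-contained replacement for a citation.

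Your route splits the statement into two independent pieces. The first is a factorization criterion: for $a,a'\in W_{I_\bullet}$ and $b'\in{}^{I_\bullet}W$, $ab\le a'b'$ if and only if there is $u\in W_{I_\bullet}$ with $au\le a'$ and $u^{-1}b\le b'$. This holds in any Coxeter group and for any standard parabolic subgroup, finite or not. The second is the order-reversing bijection $z\mapsto w_\bullet z$ on $W_{I_\bullet}$. This is the only place finiteness enters, so it plays the role that the cited simplification plays in the paper.

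One point needs tightening. The two properties of the Demazure product you invoke, $zz'\le z\ast z'$ and monotonicity in each argument, are not recalled in \S\ref{sec:Lutp}; only the definition is. Both follow at once from the standard characterization $z\ast z'=\max\{z_1z_2\mid z_1\le z,\ z_2\le z'\}$, which you should state or cite explicitly.

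The ($\Rightarrow$) direction is fine as written. The two halves of a reduced subword of the concatenated reduced word for $a'b'$ are reduced subwords of the chosen words for $a'$ and $b'$, which gives $a_1\le a'$ and $b_1\le b'$.
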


Let $Q=\{u\in W\mid u\le w_0w_\bullet\}$, equipped with the Bruhat order inherited from $W$. Let $Q^{\mathrm{opp}}$ be the opposite poset of $Q$. Recall the poset $P$ in \S\ref{sec:partial-flag-tnn}.
\begin{prop}
\label{prop:poset-isomorphism}
The map
\[
\phi:P\longrightarrow Q^{\mathrm{opp}},\qquad (v,w)\longmapsto v\theta(w^{-1})w^\bullet=vw_\bullet\tau(w^{-1})w_0=w_\bullet\tau(v)\tau(w^{-1})w_0
\]
is an isomorphism of posets.
\end{prop}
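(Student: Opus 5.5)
The plan is to reduce everything to Proposition~\ref{prop:BH}, after untangling the three formulas for $\phi$ and the effect of right multiplication by $w_0$.

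First I check the equalities in the statement. Since $\theta(w)=w_\bullet\tau(w)w_\bullet^{-1}$ and $w^\bullet=w_0w_\bullet=w_\bullet w_0$, we get
\[
v\theta(w^{-1})w^\bullet=vw_\bullet\tau(w^{-1})w_\bullet^{-1}w_\bullet w_0=vw_\bullet\tau(w^{-1})w_0 .
\]
For $v\in W_{I_\bullet}$, $\theta$ acts trivially on $W_{I_\bullet}$ (it is the identity on the derived subgroup of $L_{I_\bullet}$). Hence $w_\bullet\tau(v)w_\bullet^{-1}=v$, so $vw_\bullet=w_\bullet\tau(v)$. This gives the third expression.

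Now put $x=\tau(v)\in W_{I_\bullet}$ and $y=\tau(w)\in W^{I_\bullet}$; the diagram automorphism $\tau$ preserves $I_\bullet$, Bruhat order and minimal coset representatives. Then $\phi(v,w)=\psi(v,w)\,w_0$ with $\psi(v,w):=w_\bullet xy^{-1}$.

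Next I translate $Q$. Right multiplication by $w_0$ is an order-reversing bijection of $W$, and $w_0$ commutes with $w_\bullet$. Hence $u\le w_0w_\bullet$ if and only if $uw_0\ge w_\bullet$. So $Q^{\mathrm{opp}}$ is isomorphic, via $u\mapsto uw_0$, to $Q'=\{z\in W\mid z\ge w_\bullet\}$ with its Bruhat order. It therefore suffices to show that $\psi$ is a poset isomorphism from $P$ onto $Q'$, up to the correct direction; I will fix the direction conventions carefully at the end.

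I will treat bijectivity and the order separately.

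\emph{Injectivity.} Every $z\in W$ factors uniquely as $z=ab$ with $a\in W_{I_\bullet}$ and $b\in{}^{I_\bullet}W=(W^{I_\bullet})^{-1}$. Since $w_\bullet x\in W_{I_\bullet}$ determines $x$, the pair $(x,y)$, and hence $(v,w)$, is recovered from $\psi(v,w)$.

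\emph{Image.} Apply Proposition~\ref{prop:BH} with $x'=y'=e$, noting $\psi(e,e)=w_\bullet$. Then $w_\bullet\le w_\bullet xy^{-1}$ if and only if there is $u\in W_{I_\bullet}$ with $x\le u$ and $u\le y$, which is equivalent to $x\le y$ (take $u=x$). Thus the image of $W_{I_\bullet}\times W^{I_\bullet}$ intersected with $Q'$ is exactly the set of pairs with $\tau(v)\le\tau(w)$, i.e.\ $v\le w$. So $\psi$ is a bijection $P\to Q'$.

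\emph{Order.} Proposition~\ref{prop:BH} gives that $\psi(v',w')\le\psi(v,w)$ if and only if there is $u\in W_{I_\bullet}$ with $\tau(v)\le\tau(v')u$ and $\tau(w')u\le\tau(w)$. Setting $z=\tau(u)$, this says $v\le v'z$ and $w'z\le w$. This matches the defining condition of $(v,w)\preceq(v',w')$ except for the middle inequality $v'z\le w'z$.

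The main obstacle is this missing middle inequality. I would first try to show that $z$ can always be chosen so that $v'z\le w'z$ holds. The idea is to take $z$ minimal in a suitable sense: for instance, replace $z$ by the element obtained from the Demazure-product/lifting property for the coset $w'W_{I_\bullet}$, using that $w'$ is a minimal coset representative, so $\ell(w'z)=\ell(w')+\ell(z)$, and that $v'\le w'$. If a direct argument fails, I would cite Rietsch's equivalent description of $\preceq$ \cite{RietschClosureRelationsPartialFlag}, or the discussion in \cite{BaoHe2021}, which shows the condition with the middle inequality is equivalent to the two-sided one.

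Finally I compose with $z\mapsto zw_0$ and track the two order reversals (passing from $\psi$ to $\phi$, and from $Q$ to $Q^{\mathrm{opp}}$). This should confirm that $\preceq$ on $P$ goes to the order of $Q^{\mathrm{opp}}$.
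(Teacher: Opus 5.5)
Your argument is the paper's: well-definedness and the image come from Proposition~\ref{prop:BH} with $x'=y'=e$, injectivity from the parabolic factorization, and the order comparison from Proposition~\ref{prop:BH} after applying $\tau$.

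The step you call the main obstacle is not one, and no special choice of $z$ is needed. Since $(v',w')\in P$, you have $v'\le w'$. Since $w'\in W^{I_\bullet}$, you have $\ell(w'z)=\ell(w')+\ell(z)$. Hence for \emph{every} $z\in W_{I_\bullet}$ one has $v'z\le v'\ast z\le w'\ast z=w'z$, by monotonicity of the Demazure product. Equivalently, $v'z$ is the product of a subword of the reduced word of $w'z$ obtained by concatenating reduced words of $w'$ and $z$. So the middle inequality is automatic, and no choice of $z$ or appeal to Rietsch is required. This is what the paper uses silently when it says $\phi$ respects the orders by Proposition~\ref{prop:BH}.

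Do not expect the final bookkeeping to confirm $Q^{\mathrm{opp}}$ if you use the definition of $\preceq$ exactly as written in \S\ref{sec:partial-flag-tnn}. Your own computation gives $(v,w)\preceq(v',w')\iff\psi(v',w')\le\psi(v,w)\iff\phi(v,w)\le\phi(v',w')$. The two reversals you list cancel, and Proposition~\ref{prop:BH} itself supplies a third.

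For instance, $(e,w_0w_\bullet)\preceq(e,e)$ under that definition (take $z=e$), while $\phi(e,w_0w_\bullet)=e\le w^\bullet=\phi(e,e)$. So, with that literal definition and whenever $w^\bullet\neq e$, $\phi$ is an isomorphism onto $Q$, not $Q^{\mathrm{opp}}$.

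The statement with $Q^{\mathrm{opp}}$ is correct for Rietsch's closure order. In that order, $(v',w')\preceq(v,w)$ iff $v\le v'z\le w'z\le w$ for some $z\in W_{I_\bullet}$, i.e.\ the primed pair indexes the cell lying in the closure of the other. This is the convention actually used in \S\ref{sec:partial-flag-tnn}(b), and the one needed for Theorem~\ref{thm:main-positive-strata}(5). You should state explicitly that you adopt it. The paper's one-line order argument passes over the same point.
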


\begin{proof}
We first show the map is well-defined, that is, $v\theta(w^{-1})w^\bullet\in Q$. It suffices to show $w_\bullet\leq w_\bullet\tau(v)\tau(w^{-1})$. Since $v\leq w$ and $\tau$ is induced from a diagram automorphism, we have $\tau(v)\leq\tau(w)$. Since $\tau(I_\bullet) = I_\bullet$, we have $\tau(v) \in W_{I_\bullet}$ and $\tau(w)\in W^{I_\bullet}$. Now by Proposition~\ref{prop:BH}, we see that $w_\bullet\leq w_\bullet\tau(v)\tau(w^{-1})$.

We next show the map $P\to Q$ is bijective. The injectivity is clear by the parabolic factorization. We show surjectivity here. Let $u\leq w^\bullet$. Then $w_\bullet\leq uw_0=w_\bullet(w_\bullet uw_0)$. We have the unique factorization $w_\bullet uw_0=\tau(u'')\tau(u')^{-1}$ for $u'\in W^{I_\bullet}$ and $u''\in W_{I_\bullet}$. It follows by Proposition~\ref{prop:BH} again that $u''\leq u'$. In particular, $u=\phi((u'',u'))$ with $(u'',u')\in P$. This finishes the surjectivity.

Finally, the map $\phi$ respects the partial orderings by Proposition~\ref{prop:BH}. This completes the proof.
\end{proof}

The poset $Q$ has a natural involution via the inverse map. Under the isomorphism of posets, we have the induced involution
\[
\sigma:P\to P,
\qquad
(t,s)\mapsto(v,w),\quad \text{where }wv^{-1}=\theta(st^{-1})^{-1}.
\]

\subsection{Totally nonnegative symmetric spaces}
\begin{defi}
We define the totally nonnegative symmetric space by
\[
\mathcal S_{\geq0}:=\overline{G_{>0}K/K},
\]
where the closure is taken in the Hausdorff topology. 
\end{defi}
By definition, $\mathcal S_{\geq0}$ is contained in the real locus $\mathcal S (\mathbb{R})$ of the symmetric space. We also have the natural action of $G_{\ge 0}$ on $\mathcal S_{\ge0}$. We stratify this space by intersecting it with the double Bruhat cells $\mathring{\mathcal S}_u$ from \S\ref{sec:Su}. For $u\in Q$, we define
\[
\mathcal S_{u}^{>0}:=\mathcal S_{\geq0}\cap \mathring {\mathcal  S}_u,
\qquad
\mathcal S_{u}^{\geq0}:=\overline{\mathcal S_{u}^{>0}}.
\]
We also define the totally positive symmetric space to be
\[
\mathcal S_{>0}:=\mathcal S_{e}^{>0}.
\]

Recall the quotient torus $\overline{T} = T / T^\theta$. For any $t \in T$, we denote by $\overline{t}$ its image in $\overline{T}$. We have $\overline{T} \cong TK/K$. Note that the induced map $T(\mathbb{R}) \rightarrow \overline{T}(\mathbb{R})$ on $\mathbb {R}$-points is generally not surjective. We define $\overline{T}_{>0} = \langle \chi(a) \vert \chi \in \breve{Y}, a \in \mathbb R_{>0} \rangle$. 

\begin{lem}\label{lem:Tbar}
The quotient map $T_{>0} \rightarrow \overline{T}_{>0}$ is surjective. 
\end{lem}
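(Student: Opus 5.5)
The plan is to compare cocharacter lattices and use that positive reals have arbitrary positive roots. Write $p:T\to\overline T$ for the quotient map.

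\textbf{Step 1: the image of $T_{>0}$ lies in $\overline T_{>0}$.} Every generator of $T_{>0}$ has the form $\lambda(a)$ with $\lambda\in Y$ and $a\in\mathbb R_{>0}$. Its image is $p(\lambda(a))=(p\circ\lambda)(a)$. Here $p\circ\lambda$ is a cocharacter of $\overline T$, so it lies in $\breve Y$. Since $p$ is a homomorphism, $p(T_{>0})\subset\overline T_{>0}$.

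\textbf{Step 2: $Y\to\breve Y$ has finite cokernel.} The character lattice of $\overline T$ is $\breve X=\{\mu-\theta(\mu)\mid \mu\in X\}$. The map $p^\ast:\breve X\to X$ is the inclusion of a sublattice, possibly non-saturated. Dually, $\breve Y=\operatorname{Hom}(\breve X,\mathbb Z)$, and $Y\to\breve Y$, $\lambda\mapsto p\circ\lambda$, is restriction of functionals from $X$ to $\breve X$. Restriction $\operatorname{Hom}(X,\mathbb Q)\to\operatorname{Hom}(\breve X,\mathbb Q)$ is surjective, because $\breve X\otimes\mathbb Q$ is a subspace of $X\otimes\mathbb Q$. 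Hence $Y\otimes\mathbb Q\to\breve Y\otimes\mathbb Q$ is surjective. Since $\breve Y$ is finitely generated, the cokernel of $Y\to\breve Y$ is finite. Concretely, for each $\chi\in\breve Y$ there are an integer $n\ge1$ and $\lambda\in Y$ with $p\circ\lambda=n\chi$.

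\textbf{Step 3: generators of $\overline T_{>0}$ lift.} Let $\chi\in\breve Y$ and $a>0$, and take $n,\lambda$ as in Step 2. Then
\[
\chi(a)=\chi\bigl((a^{1/n})^n\bigr)=(n\chi)(a^{1/n})=p\bigl(\lambda(a^{1/n})\bigr),
\]
and $\lambda(a^{1/n})\in T_{>0}$ because $a^{1/n}>0$. So every generator of $\overline T_{>0}$ lies in $p(T_{>0})$.

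\textbf{Step 4: conclude.} $T_{>0}$ is closed under products, so $p(T_{>0})$ is a submonoid of $\overline T$. By Step 3 it contains every generator $\chi(a)$ of $\overline T_{>0}$, hence it contains all of $\overline T_{>0}$. Together with Step 1, $p(T_{>0})=\overline T_{>0}$, which is the claimed surjectivity.

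The only step that needs any care is Step 2: $\breve X$ need not be saturated in $X$, so $Y\to\breve Y$ need not be surjective integrally. This is exactly why $T(\mathbb R)\to\overline T(\mathbb R)$ can fail to be surjective. The extraction of $n$-th roots in Step 3, which is available on $\mathbb R_{>0}$, is what overcomes this.
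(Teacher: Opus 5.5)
Your proof is correct and follows the paper's route: the image of $Y$ has finite index in $\breve{Y}$ (in fact $2\breve{Y}$ lies in the image, which is the paper's ``at most $2$''), and each generator lifts by taking positive $n$-th roots in $\mathbb{R}_{>0}$. You also supply details the paper leaves implicit: rational surjectivity of $Y\to\breve{Y}$ via restriction of functionals, and the submonoid bookkeeping in Steps~1 and~4.
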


\begin{proof}
The image of the cocharacter lattice $Y$ has finite index in $\breve{Y}$ via the quotient map (at most $2$ in our case). On the other hand, for any $a \in \mathbb R_{>0}$, the (positive) $n$-th root $\sqrt[n]{a}$ always exists in $\mathbb {R}_{>0}$. This proves the lemma.
\end{proof}

We now state the main theorem of this paper. 
\begin{theorem}\label{thm:main-positive-strata}
\begin{enumerate}

\item The totally nonnegative symmetric space satisfies
\[
\mathcal S_{\geq0}\subset (B^-K/K)\cap(B^+K/K).
\]

\item Let $(v,w)\in P$. We have an isomorphism of semi-algebraic varieties 
\[
G^{>0}_{\mathbf v_+,\mathbf w}\times  \overline{T}_{>0} \xrightarrow{\sim}\mathcal S^{>0}_{\phi(v,w)}, \quad (g, \overline{t}) \mapsto gtK.
\]
\item Let $(v,w)\in P$. We have an isomorphism of semi-algebraic varieties 
\[
H^{>0}_{\mathbf v_+,\mathbf w} \times  \overline{T}_{>0}  \xrightarrow{\sim}\mathcal S^{>0}_{\phi(v,w)^{-1}}, \quad (g, \overline{t}) \mapsto gtK.
\]
\item The stratification $\mathcal S_{\ge 0} = \sqcup_{u \in Q} \mathcal S^{>0}_{u}$ is a cell decomposition. In particular, the stratum $ \mathcal S^{>0}_{u}$ is non-empty, and $ \mathcal S^{>0}_{u} \cong \mathbb {R}_{>0}^{\ell(w^\bullet) - \ell(u) +r}$.

\item Let $u\in Q$. The closure of $\mathcal S_u^{>0}$ is given by
\[
\overline{\mathcal S_u^{>0}}=\bigsqcup_{u\leq u'}\mathcal S_{u'}^{>0}.
\]
In particular, the totally positive part $\mathcal S_{>0}$ is dense and open in $\mathcal S_{\ge 0}$.

\item Let $u\in Q$. The cell $\mathcal S_{u}^{>0}$ is a connected component of $\mathring{\mathcal S}_u( \mathbb R)$.

\item Let $(v,w)\in P$ and write $\sigma(v,w)=(v',w')$. Given any basis of $\breve{Y}$, we can obtain an isomorphism $ \mathbb{R}^r_{>0}  \xrightarrow{\sim} \overline{T}_{>0}$. Similarly, given any basis of $\breve{X}$, we can obtain an isomorphism $  \overline{T}_{>0}\xrightarrow{\sim} \mathbb{R}^r_{>0} $. The following composition is subtraction-free:
\[
(\mathbb{R}_{>0})^{\ell(w) - \ell(v) +r }   \rightarrow G^{>0}_{\mathbf v_+,\mathbf w} \times  \overline{T}_{>0}  \rightarrow \mathcal S^{>0}_{\phi(v,w)}\rightarrow H^{>0}_{\mathbf v'_+,\mathbf w'} \times  \overline{T}_{>0} \rightarrow (\mathbb{R}_{>0})^{\ell(w) - \ell(v) +r }.
\]
Note that the choice of bases is irrelevant to this claim. 
\item The totally nonnegative symmetric space $\mathcal S_{\ge 0}$ is contractible. 
\end{enumerate}
\end{theorem}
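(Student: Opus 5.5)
The plan is to reduce everything to the partial flag variety $\mathcal P_{I_\bullet}$ through the local structure theorem $B^-K/K\cong U_{P^-_{I_\bullet}}\times\overline{T}$, and then import the Marsh--Rietsch and Rietsch results recalled in \S\ref{sec:partial-flag-tnn}.

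\textbf{Step 1: proving (1).} This is the step I expect to be the main obstacle, and I would handle it with the special regular function promised in the introduction. Take a function $f$ on $\mathcal S$ whose nonvanishing locus is exactly $B^-K/K$. A natural candidate is a product of generalized minors $\Delta_{\varpi_i}$ evaluated on $\iota(g)=g\theta(g)^{-1}$, twisted by $\dot w^\bullet$. I would then show that $f$ is bounded below by a positive constant on compact pieces of $G_{>0}K/K$ in a way that survives Hausdorff limits.

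Concretely, write $g=u^-t u^+\in G_{>0}$. The factor $u^+$ is not $\theta$-stable, so one uses the rank-one identity \eqref{eq:Luxy} repeatedly, together with $\theta(x_i(a))=\dot w_\bullet y_{\tau(i)}(-a)\dot w_\bullet^{-1}$, to push $u^+$ past $\theta(u^+)^{-1}$. The goal is to show that $\iota(g)$ has a Gauss decomposition $U^-\,T\,\dot w^\bullet\,\theta(U^-)$ whose torus part is a subtraction-free expression in positive parameters. The torus part should be $t\theta(t)^{-1}$ times a factor $\ge 1$, coming from terms of the form $ac+b^2\ge b^2$.

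With that in hand, $f$ cannot tend to $0$ except when the $\overline T$-coordinate degenerates, and in that case the point leaves $\mathcal S(\mathbb R)$ rather than converging. Hence limits stay inside $B^-K/K$. Applying $\omega$, which commutes with $\theta$ and preserves $G_{>0}$, gives the $B^+$ statement.

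\textbf{Step 2: reduction to the partial flag variety.} By (1) and the local structure theorem, every point of $\mathcal S_{\ge0}$ can be written uniquely as $h\,t\,K$ with $hP^+_{I_\bullet}$ in the chart $U^-P^+_{I_\bullet}/P^+_{I_\bullet}$ and $\bar t\in\overline T$. I would show that $G_{>0}K/K$ corresponds exactly to $\Pi^{>0}_{e,w^\bullet}\times\overline T_{>0}$, using Lemma~\ref{lem:Tbar} and the fact that $u^+tK$ can be rewritten into $U^-$-form with positive parameters (the rank-one calculations). Taking closures then identifies $\mathcal S_{\ge0}$ with
\[
\bigl((U^-P^+_{I_\bullet}/P^+_{I_\bullet})\cap(\mathcal P_{I_\bullet})_{\ge0}\bigr)\times\overline T_{>0}.
\]
The torus part stays in $\overline T_{>0}$ because of the lower bound from Step 1.

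This identification gives (8) directly from Lemma~\ref{lem:contractible}. It also gives the stratification, using (a) and (c) of \S\ref{sec:partial-flag-tnn}.

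\textbf{Step 3: matching strata, proving (2) and (4).} For $g\in G^{>0}_{\mathbf v_+,\mathbf w}\subset U^-\dot v\cap B^+\dot wB^+$, I would compute $\iota(gt)=g\,t\theta(t)^{-1}\theta(g)^{-1}$ and determine its $(B^+,B^-\dot w^\bullet)$ double coset. The relations $\theta(B^-)=\dot w^\bullet B^-\dot w^{\bullet,-1}$ and $g\in B^+\dot wB^+\cap U^-\dot v$ should place it in $B^+\,\dot v\theta(\dot w^{-1})\dot w^\bullet\,B^-\dot w^\bullet$, which is the cell indexed by $\phi(v,w)$. Because $\phi$ is a bijection (Proposition~\ref{prop:poset-isomorphism}), the strata match. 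This gives (2) and (4), with dimensions read off from Lemma~\ref{le:Su}.

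\textbf{Step 4: the remaining parts (3), (5), (6), (7).} Part (3) follows by applying $\omega$ and the inverse-type symmetry $u\mapsto u^{-1}$, which corresponds to $\sigma$.

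For (5), the closure relations come from Rietsch's result (b) combined with the order reversal in Proposition~\ref{prop:poset-isomorphism}.

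For (6), $\mathcal S^{>0}_u$ is open and closed in $\mathring{\mathcal S}_u(\mathbb R)$. It is open because it is a cell of full dimension, and it is relatively closed by (5) together with the $\overline T_{>0}$ lower bound from Step 1.

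For (7), subtraction-freeness comes from composing the positive exchange relations \eqref{eq:Luxy}: the map from $G$-coordinates to $H$-coordinates is a Gauss-decomposition rewriting built only from these relations. Working on the quotient torus $\overline T$ avoids the square roots that would otherwise appear.
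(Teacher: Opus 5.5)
Most of Steps 2–4 follow the paper: the chart $B^-K/K\cong U_{P^-_{I_\bullet}}\times\overline T$, Marsh--Rietsch and Rietsch on the partial flag side, invariance of domain, and Lemma~\ref{lem:contractible}. But Step 1, on which everything else rests, has a genuine gap.

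\textbf{The gap in Step 1.} Any regular $f$ whose nonvanishing locus is exactly $B^-K/K$ restricts on that chart to a constant times a nontrivial character of the $\overline T$-coordinate. Points of $G_{>0}K/K$ have arbitrary $\overline T_{>0}$-coordinate, so $f$ takes arbitrarily small positive values on $G_{>0}K/K$. Your own estimate (torus part $t\theta(t)^{-1}$ times a factor $\ge 1$, with $t$ free) says exactly this. Your way out, that a degenerating torus coordinate forces the point to leave $\mathcal S(\mathbb R)$, is precisely the content of (1). It fails for general sequences in $B^-K/K$ approaching the boundary. For sequences from $G_{>0}K/K$ it needs a second function that blows up at the reciprocal rate, and that is the missing idea.

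The paper uses the product $\Pi^\lambda_{w^\bullet}(gK)=\Delta^{w^\bullet\lambda}_{w^\bullet\lambda}(g\theta(g)^{-1})\,\nabla^{-w^\bullet\lambda}_{-w^\bullet\lambda}(g\theta(g)^{-1})$, in which the torus contributions cancel. It shows $\Pi^\lambda_{w^\bullet}\ge 1$ on $G_{>0}K/K$, and $\Pi^\lambda_{w^\bullet}=0$ off $(B^+K/K)\cap(B^-K/K)$ via Springer's codimension-one $B^+$-orbits. Continuity then gives (1) for both Borels at once. The lower bound needs Proposition~\ref{prop:positive-borels}, $B^-_{>0}K/K=B^+_{>0}K/K$. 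That in turn comes from the real-rank-one analysis of \S\ref{subsec:Rank-One}: the AI$_1$ formula, and otherwise the commutation of $U_{-\alpha_i}$ with $U_{\beta_i}$. \eqref{eq:Luxy} alone cannot move $u^+$ past $\theta(u^+)^{-1}$, because $\theta(x_i(a))$ lies in the root subgroup of $-w_\bullet\alpha_{\tau(i)}$, not of $-\alpha_i$.

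\textbf{Part (7).} For $v\ne e$ the Marsh--Rietsch words contain factors $\dot s_{i_k}$, so exchange relations alone do not give a subtraction-free rewriting. The paper passes through the top cell via $U^+_{v,>0}\times G^{>0}_{\mathbf v_+,\mathbf w}\times\overline T_{>0}$, using Bao--He's subtraction-free maps. It then sends the $U^+_{v,>0}$-coordinates to the boundary (Galashin--Karp--Lam, Lemma~5.9).

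\textbf{Double-coset bookkeeping in (3) and Step 3.} Your bookkeeping does not work as asserted. Since $\omega$ commutes with $\iota$ but swaps $B^\pm$, and $\theta(y)=y^{-1}$ on $\iota(\mathcal S)$, one gets $\omega(\mathring{\mathcal S}_u)=\mathring{\mathcal S}_{w_0\tau(u)^{-1}w_0}$, not $\mathring{\mathcal S}_{u^{-1}}$. Likewise $g\in U^-\dot v$ controls the $(B^-,B^+)$-coset of $\iota(gt)$. The computation, which is also the paper's, gives $\iota(gt)\in U^-\dot u\,B^+\dot w^{\bullet,-1}$ with $u=\phi(v,w)$. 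But $\mathring{\mathcal S}_u$ is defined by the $(B^+,B^-)$-coset, whose index is then $w_0\tau(\phi(v,w))^{-1}w_0$.

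For example, in type AI$_2$ take $g=y_1(a)y_2(b)$. Then $\iota(gt)=g\,t^2g^{\top}\in B^+\dot s_1\dot s_2B^+$, so $gtK\in\mathring{\mathcal S}_{s_2}$, while $\phi(e,s_1s_2)=s_1$. The paper's proof makes the same identification. So the indices in (2) and (3), and the involution $\sigma$ in (7), must be corrected by this twist rather than proved as literally stated.
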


The rest of the paper is devoted to proving the main theorem. We first record a basic example showing that the construction recovers Lusztig’s total positivity for reductive groups.

\begin{example} Let $\tilde{G} = G \times G$.  Let $\tilde{B}^+ = B^+ \times B^-$ and $\tilde{B}^- = B^- \times B^+$.  Let $\omega'$ be the involution of $G$ characterized by $
\omega'(x_i(a))=y_i(-a)$, $ \omega'(y_i(a))=x_i(-a)$, $\omega'(t)=t^{-1}$, for $i\in I$ and $t \in T$. Define the involution $\theta$ on the ambient group $\tilde{G}$ by $\theta((g, h)) = (\omega'(h), \omega'(g))$. The fixed-point subgroup $\tilde{K}$ is isomorphic to $G$. The symmetric space $\tilde{G}/\tilde{K}$ is also isomorphic to $G$ via the map 
\[
\tilde{G}/\tilde{K} \rightarrow G, \quad (g,h) \mapsto g \omega'(h)^{-1}.
\]

Under this isomorphism, we have $(\tilde{G}/\tilde{K})_{\ge 0} \cong G_{\ge 0}$. In particular, we recover various results by Lusztig in \cite{LuTotalPositivity}. For example, the stratification for $(\tilde{G}/\tilde{K})_{\ge 0}$ is identified with the stratification of $G_{\ge 0}$ by double Bruhat cells. 
\end{example}

\begin{remark}
The choice of the Borel subgroups $\tilde{B}^\pm = (B^\pm, B^\mp)$ and the definition of the involution $\theta$ on $\tilde{G}$ are consistent with the rest of the paper and the construction in \cites{BaoSongSymmetricSubgroupSchemesFrobenius, SongSymmetricSubgroupSchemes} from the point of view of quantum symmetric pairs. 
\end{remark}

\subsection{Rank-one calculations}\label{subsec:Rank-One}

For $i\in I_\circ$ and $a,b \in \mathbb {R}_{>0}$, the goal of this subsection is to directly compute $a'$ and $b'$ such that
\begin{equation}\label{eq:rankone}
y_i(a)\alpha_{i}^\vee(b)K=\dot w_{\bullet}^{-1} x_{\tau(i)}(a')\alpha_{\tau(i)}^\vee(b')K.
\end{equation}
One should view this equation as the generalization of the exchange relations in \S\ref{sec:Lutp}. Thanks to the isomorphisms $B^-K/K \cong U_{P^-_{I_\bullet}} \times \overline{T}$ and $B^+K/K \cong U_{P^+_{I_\bullet}} \times \overline{T}$, the solution for $a'$ (if it exists) is unique, while the solution for $b'$ (if it exists) is unique only if we pass to the quotient torus. 
\begin{remark}
The existence of positive solutions, that is, $a', b' \in \mathbb {R}_{>0}$, imposes nontrivial  conditions on the parameters $\overline{\zeta}_i$; see \S\ref{subsec:algebraic-groups}.
\end{remark}

\begin{table}[htbp]
\centering
\caption{Satake diagrams of symmetric pairs of real rank one}
\label{tab:rank-one-satake}
\small
\newcommand{\wnode}[2]{\node[circle,draw,fill=white,inner sep=0pt,minimum size=4.2pt] (#1) at #2 {};}
\newcommand{\bnode}[2]{\node[circle,draw,fill=black,inner sep=0pt,minimum size=4.2pt] (#1) at #2 {};}
\newcommand{\nlabel}[2]{\node[below=2pt] at #1 {\scriptsize $#2$};}
\renewcommand{\arraystretch}{1.45}
\begin{tabular}{|c|c||c|c|}
\hline
AI$_1$ &
\begin{tikzpicture}[baseline=-.55ex,scale=.9]
\wnode{a1}{(0,0)};\nlabel{(0,0)}{1}
\end{tikzpicture}
&
AII$_3$ &
\begin{tikzpicture}[baseline=-.55ex,scale=.9]
\bnode{a1}{(0,0)};\wnode{a2}{(.45,0)};\bnode{a3}{(.9,0)};
\draw (a1)--(a2)--(a3);
\nlabel{(0,0)}{1}\nlabel{(.45,0)}{2}\nlabel{(.9,0)}{3}
\end{tikzpicture}
\\ \hline
AIII$_{11}$ &
\begin{tikzpicture}[baseline=-.55ex,scale=.9]
\wnode{a1}{(0,0)};\wnode{a2}{(.85,0)};
\draw[<->] (a1) to[bend left=45] (a2);
\nlabel{(0,0)}{1}\nlabel{(.85,0)}{2}
\end{tikzpicture}
&
AIV$_n$, $n\geq 2$ &
\begin{tikzpicture}[baseline=-.55ex,scale=.9]
\wnode{a1}{(0,0)};\bnode{a2}{(.45,0)};\bnode{a3}{(2.15,0)};\wnode{a4}{(2.6,0)};
\draw (a1)--(a2);\draw[dashed] (a2)--(a3);\draw (a3)--(a4);
\draw[<->] (a1) to[bend left=45] (a4);
\nlabel{(0,0)}{1}\nlabel{(.45,0)}{2}\nlabel{(2.6,0)}{n}
\end{tikzpicture}
\\ \hline
BII$_n$, $n\geq 2$ &
\begin{tikzpicture}[baseline=-.55ex,scale=.9]
\wnode{a1}{(0,0)};\bnode{a2}{(.45,0)};\bnode{a3}{(2.05,0)};\bnode{a4}{(2.6,0)};
\draw (a1)--(a2);\draw[dashed] (a2)--(a3);\draw[satake double arrow] (a3.east)--(a4.west);
\nlabel{(0,0)}{1}\nlabel{(.45,0)}{2}\node[below=2pt] at (2.05,0) {\scriptsize $n\!-1$};\nlabel{(2.6,0)}{n}
\end{tikzpicture}
&
CII$_{n,1}$, $n\geq 3$ &
\begin{tikzpicture}[baseline=-.55ex,scale=.9]
\bnode{a1}{(0,0)};\wnode{a2}{(.45,0)};\bnode{a3}{(.9,0)};\bnode{a4}{(2.2,0)};\bnode{a5}{(2.75,0)};
\draw (a1)--(a2)--(a3);\draw[dashed] (a3)--(a4);\draw[satake double arrow] (a5.west)--(a4.east);
\nlabel{(0,0)}{1}\nlabel{(.45,0)}{2}\nlabel{(2.75,0)}{n}
\end{tikzpicture}
\\ \hline
DII$_n$, $n\geq 4$ &
\begin{tikzpicture}[baseline=-.55ex,scale=.9]
\wnode{a1}{(0,0)};\bnode{a2}{(.45,0)};\bnode{a3}{(2.0,0)};\bnode{a4}{(2.35,.45)};\bnode{a5}{(2.35,-.45)};
\draw (a1)--(a2);\draw[dashed] (a2)--(a3);\draw (a3)--(a4);\draw (a3)--(a5);
\nlabel{(0,0)}{1}\nlabel{(.45,0)}{2}\node[right=2pt] at (a4) {\scriptsize $n\!-1$};\node[right=2pt] at (a5) {\scriptsize $n$};
\end{tikzpicture}
&
FII &
\begin{tikzpicture}[baseline=-.55ex,scale=.9]
\bnode{a1}{(0,0)};\bnode{a2}{(.45,0)};\bnode{a3}{(.95,0)};\wnode{a4}{(1.4,0)};
\draw (a1)--(a2);\draw[satake double arrow] (a2.east)--(a3.west);\draw (a3)--(a4);
\nlabel{(0,0)}{1}\nlabel{(.45,0)}{2}\nlabel{(.95,0)}{3}\nlabel{(1.4,0)}{4}
\end{tikzpicture}
\\ \hline
\end{tabular}
\end{table}

Let $i\in I_\circ$. We construct the subset $I_i \subset I$ attached to the $\langle\tau\rangle$-orbit of $i$ as follows: we first remove $j \in I_\circ$ such that $j \neq i$ and $j \neq \tau(i)$ from $I$; then we remove any $j \in I_\bullet$ such that $j$ is not in the same connected component as $i$ or $\tau(i)$ in the resulting Dynkin diagram from the first step. The resulting Satake diagram was called real-rank-one in \cite{BaoWangiCanonicalBasis}*{\S3.2 and Definition~3.2} as listed in Table 1. We have the natural decomposition $I_i = I_{i,\circ} \sqcup I_{i,\bullet}$. Let $w_{i, \bullet}$ be the longest element of the Weyl group $W_{I_{i,\bullet}}$.

We then define $L_{I_i}$ as the Levi subgroup of $G$ associated to $I_{i} \subset I$. It follows from the construction that $L_{I_i}$ is $\theta$-stable. Let $K_{I_i} = L_{I_i} \cap K$ be the fixed-point subgroup of $L_{I_i}$. 

We first consider the case when the symmetric pair $(L_{I_i},K_{I_i})$ is of type AI$_1$. In this case, we have, by direct computation,
\begin{align}\label{eq:AI}
y_i(a)\alpha_i^\vee(b)K   &= x_i\left(\frac{ab^4}{1+a^2b^4}\right)\alpha_i^\vee\left(\frac{b}{\sqrt{1+a^2b^4}}\right)K \\
\notag & \stackrel{(\heartsuit)}{=} \dot{w}_\bullet^{-1}  x_i\left(\frac{ab^4}{1+a^2b^4}\right)\alpha_i^\vee\left(\frac{b}{\sqrt{1+a^2b^4}}\right)K.  
\end{align}
The identity $({\heartsuit})$ follows from the construction of $I_i$. 
 
\begin{remark}\label{rem:rankonesubtractionfree}
The appearance of the square root in the formula is superficial. Once we pass to the coordinates of the quotient torus, it will be subtraction-free, without any square root. Upon suitable choices of the cocharacter and the character of $\overline{T}$, the restriction of the birational map $U_{P^-_{I_\bullet}} \times \overline{T} \dashrightarrow  B^-K/K \cap B^+K/K  \dashrightarrow U_{P^+_{I_\bullet}} \times \overline{T}$ becomes 
\begin{align*}
\mathbb{R}_{>0} \times \mathbb{R}_{>0} &\rightarrow \mathbb{R}_{>0} \times \mathbb{R}_{>0}, \\
(s,t)  \mapsto y_i(s)\alpha_i^\vee(\sqrt{t})K   & = x_i\left(\frac{st^2}{1+s^2t^2}\right)\alpha_i^\vee\left(\frac{\sqrt{t}}{\sqrt{1+s^2t^2}}\right)K \\
&\mapsto (\frac{st^2}{1+s^2t^2}, \frac{t}{1+s^2t^2}).
\end{align*}
Note that the choice of the square root is irrelevant, since $\alpha^\vee_i(-1) \in K$ by our assumption on $(L_{I_i},K_{I_i})$.
\end{remark}

Next we consider all other types together. Recall that for type AIV$_{n}$, we shall only consider the case when $n$ is even by the assumption in \S\ref{subsec:algebraic-groups}.

Let \(i\in I_\circ\) belong to a real-rank-one Satake subdiagram which is not of type
\(\mathrm{AI}_1\). Put $\beta_i:=w_{i, \bullet}^{-1}\alpha_{\tau(i)}$. Note that $\beta_i=  w_{ \bullet}^{-1}\alpha_{\tau(i)}$ as well by the construction of $I_i$.  

 \begin{lemma}
The root subgroups \(U_{-\alpha_i}\) and \(U_{\beta_i}\) commute. Equivalently, there is no root of the form \( -r\alpha_i+s\beta_i\) with \(r,s>0\); in particular \(\beta_i-\alpha_i\) is not a root.
\end{lemma}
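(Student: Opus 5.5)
The plan is to use the involution $\theta$ to turn the statement into a statement about restricted roots of the real-rank-one diagram $I_i$, and to treat separately the one case this does not settle. The equivalence in the statement is the Chevalley commutator formula: $[U_{-\alpha_i},U_{\beta_i}]$ is a product of root subgroups $U_{-r\alpha_i+s\beta_i}$ with $r,s>0$. So it suffices to show that no such combination is a root. Everything takes place inside the $\theta$-stable Levi $L_{I_i}$, since $\beta_i=w_{i,\bullet}^{-1}\alpha_{\tau(i)}$ is supported on $I_i$.

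\emph{Step 1: compute $\theta$ on $\alpha_i$ and $\beta_i$.} Since $w_\bullet$ is an involution, $\beta_i=w_\bullet\alpha_{\tau(i)}=-\theta(\alpha_i)$. Hence also $\theta(\beta_i)=-\alpha_i$. Suppose $\gamma=-r\alpha_i+s\beta_i$ is a root with $r,s>0$. Then $\theta(\gamma)=r\beta_i-s\alpha_i$ is also a root. Computing the $(-1)$-eigenpart gives
\[
\tfrac12(\gamma-\theta\gamma)=\tfrac{s-r}{2}(\alpha_i+\beta_i)=(s-r)\lambda,\qquad \lambda:=\tfrac12(\alpha_i-\theta\alpha_i).
\]
Here $\lambda$ is the simple restricted root of the rank-one diagram.

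\emph{Step 2: the case $r\neq s$, using coefficients.} When $\tau(i)\neq i$, $\beta_i$ has coefficient $0$ at $\alpha_i$. Then $\gamma$ has coefficient $-r<0$ at $\alpha_i$ and coefficient $s>0$ at $\alpha_{\tau(i)}$. This mixed sign is impossible for a root, so this case is immediate and no restricted-root argument is needed.

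When $\tau(i)=i$, the diagram is not of type AI$_1$, so $\alpha_i$ is joined to $I_{i,\bullet}$. Hence $\beta_i=\alpha_i+\delta$ with $\delta$ a nonzero nonnegative combination of $I_{i,\bullet}$. Then $\gamma=(s-r)\alpha_i+s\delta$. If $s<r$, the coefficients of $\gamma$ again have mixed signs, a contradiction. If $s>r$, $\gamma$ is a positive root with restricted image $(s-r)\lambda$, so $s-r\in\{1,2\}$. Moreover $s\geq 2$, since $s=1$ would force $r=0$. These few possibilities are ruled out by comparing with the highest root of $I_i$ in each entry of Table~\ref{tab:rank-one-satake}: its coefficient at $\delta$ is bounded by the multiplicity data, and no root has the required shape.

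\emph{Step 3: the case $r=s$.} Here $\gamma$ lies in the $(+1)$-eigenspace, which is spanned by $\Phi_{I_\bullet}$. So $\gamma$ has zero coefficient at $\alpha_i$, which forces $\tau(i)=i$. Primitivity of roots then forces $r=s=1$, so the only remaining possibility is $\beta_i-\alpha_i=\delta\in\Phi_{I_{i,\bullet}}$.

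\emph{Step 4: exclude $\beta_i-\alpha_i\in\Phi$ (the main obstacle).} I would first try to prove $\langle\beta_i,\alpha_i^\vee\rangle\le 0$ and then invoke the $\alpha_i$-string through $\beta_i$. Failing a uniform proof, I would check the cases AII$_3$, BII$_n$, CII$_{n,1}$, DII$_n$ and FII directly from their Dynkin diagrams. For example, in AII$_3$ one has $\beta=\alpha_1+\alpha_2+\alpha_3$ and $(\beta,\alpha_2)=0$, while $\alpha_1+\alpha_3$ is not a root. In BII$_n$ and DII$_n$ the root $\beta_i$ is $\alpha_1+2\alpha_2+\cdots$, and again $(\beta_i,\alpha_1)=0$. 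Since $\beta_i\neq\alpha_i$, $\beta_i$ and $\alpha_i$ are not proportional, and the vanishing pairing together with the string property rules out $\beta_i-\alpha_i$. I expect this case-by-case verification, together with the analogous small check in Step 2, to be the only non-formal part of the proof.
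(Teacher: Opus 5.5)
The real gap is in Step 4, which is where the content of the lemma lies. Your criterion there is wrong. Orthogonality $(\beta_i,\alpha_i)=0$ together with the root-string property only gives $p=q$ for the $\alpha_i$-string $\beta_i-p\alpha_i,\dots,\beta_i+q\alpha_i$. That says $\beta_i-\alpha_i\in\Phi$ if and only if $\beta_i+\alpha_i\in\Phi$; it does not force $p=0$. For example, in $B_2$ the short roots $e_1,e_2$ are orthogonal, yet $e_1-e_2\in\Phi$. So neither your criterion nor your fallback ``$\langle\beta_i,\alpha_i^\vee\rangle\le 0$ plus strings'' settles BII$_n$ and DII$_n$ as written. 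You need one extra input, either of the following:
\begin{itemize}
\item $\alpha_1=e_1-e_2$ and $\beta_1=e_1+e_2$ are orthogonal \emph{long} roots, so $\beta_1\pm\alpha_1$ would have squared length twice that of a long root, which is impossible;
\item or, as the paper does, the direct observation that $\beta_1-\alpha_1=2e_2$ is not a root.
\end{itemize}
You list CII$_{n,1}$ and FII but do not treat them, and there the pairing is $\langle\beta_i,\alpha_i^\vee\rangle=-1$, not $0$. In those cases $p=0$ follows only from the fact that root strings have length at most $3$ outside $G_2$. Alternatively, check directly that $e_1-e_2+2e_3$ and $e_2+e_3+e_4$ are not roots.

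Two smaller points, plus a simplification. In Step 3, the $(+1)$-eigenspace is spanned by $\Phi_{I_\bullet}$ only when $\tau(i)=i$. In AIII$_{11}$, $\alpha_2-\alpha_1$ is $\theta$-fixed while $I_\bullet=\varnothing$. This is harmless, because your sign argument for $\tau(i)\neq i$ in Step 2 never used $r\neq s$, so it already covers that case for all $r,s$.

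In Step 2 you assert the highest-root comparison without carrying it out. It does work: for $s\ge 2$, some coefficient of $s\delta$ exceeds the corresponding coefficient of the highest root in every entry of the table.

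More to the point, Steps 1--3, including that comparison, are unnecessary. The paper's remark that ``the only possible obstruction is $\beta_i-\alpha_i$'' rests on an elementary fact. If $\gamma,\delta$ are linearly independent roots of a reduced root system with $\gamma+\delta\notin\Phi$, then no $r\gamma+s\delta$ with $r,s\ge 1$ is a root. To see this, suppose $\varepsilon=r\gamma+s\delta\in\Phi$. Then $0<(\varepsilon,\varepsilon)=r(\varepsilon,\gamma)+s(\varepsilon,\delta)$ forces $\varepsilon-\gamma$ or $\varepsilon-\delta$ to be a root. Induction on $r+s$, using reducedness when a coefficient drops to $0$, then yields $\gamma+\delta\in\Phi$.

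Apply this with $\gamma=-\alpha_i$ and $\delta=\beta_i$. These are independent, since $\beta_i$ is positive and $\beta_i\neq\alpha_i$ outside AI$_1$. The lemma then reduces to the single check $\beta_i-\alpha_i\notin\Phi$, and no restricted-root machinery is needed.
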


\begin{proof}
We verify the assertion case by case for the remaining real-rank-one Satake
subdiagrams. To simplify notation, we shall simply assume $(G,K) = (L_{I_i}, K_{I_i})$. We use the standard realizations of the classical root systems.

\smallskip

\noindent
\emph{Type \(\mathrm{AIII}_{11}\).}
Here
\[
        I_\circ=\{1,2\},\qquad \tau(1)=2,\qquad \tau(2)=1,
        \qquad I_\bullet=\varnothing.
\]
Taking \(i=1\), we have \(w_\bullet=1\) and hence
\[
        \beta_1=\alpha_2.
\]
The two simple roots \(\alpha_1\) and \(\alpha_2\) are disconnected in this
rank-one local diagram. Therefore
\[
        \beta_1-\alpha_1=\alpha_2-\alpha_1
\]
is not a root. The case \(i=2\) is identical.

\smallskip

\noindent
\emph{Type \(\mathrm{AIV}\).}
Let the local diagram be of type \(A_n\),
\[
        \circ-\bullet-\cdots-\bullet-\circ,
\]
with
\[
        i=1,\qquad \tau(i)=n,\qquad I_\bullet=\{2,\ldots,n-1\}.
\]
The black Weyl group is the Weyl group of the \(A_{n-2}\)-subsystem generated
by \(\alpha_2,\ldots,\alpha_{n-1}\). Hence
\[
        \beta_1=w_\bullet^{-1}\alpha_n
        =\alpha_2+\alpha_3+\cdots+\alpha_n.
\]
Thus
\[
        \beta_1-\alpha_1
        =
        -\alpha_1+\alpha_2+\cdots+\alpha_n.
\]
In type \(A_n\), every root is of the form
\[
        \pm(\alpha_p+\alpha_{p+1}+\cdots+\alpha_q)
\]
for some \(1\le p\le q\le n\). The expression
\(-\alpha_1+\alpha_2+\cdots+\alpha_n\) has both a negative and positive
coefficient in the basis of simple roots, and therefore is not a root. The case
\(i=n\) is symmetric.

\smallskip

\noindent
\emph{Type \(\mathrm{AII}_3\).}
The local diagram is of type \(A_3\),
\[
        \bullet-\circ-\bullet,
\]
with
\[
        i=2,\qquad I_\bullet=\{1,3\},\qquad \tau(2)=2.
\]
Then
\[
        w_\bullet=s_1s_3,
\]
and therefore
\[
        \beta_2=w_\bullet^{-1}\alpha_2=s_3s_1\alpha_2.
\]
Since
\[
        s_1\alpha_2=\alpha_1+\alpha_2,
\]
we get
\[
        \beta_2=s_3(\alpha_1+\alpha_2)
        =\alpha_1+\alpha_2+\alpha_3.
\]
Hence
\[
        \beta_2-\alpha_2=\alpha_1+\alpha_3.
\]
This is not a root in type \(A_3\), since the positive roots are precisely the
consecutive sums
\[
        \alpha_p+\alpha_{p+1}+\cdots+\alpha_q.
\]
The sum \(\alpha_1+\alpha_3\) is not consecutive. Thus
\(\beta_2-\alpha_2\) is not a root.

\smallskip

\noindent
\emph{Type \(\mathrm{BII}\).}
Use the standard realization of type \(B_n\):
\[
        \alpha_1=e_1-e_2,\qquad
        \alpha_2=e_2-e_3,\qquad \ldots,\qquad
        \alpha_n=e_n.
\]
The local diagram has
\[
        i=1,\qquad I_\bullet=\{2,\ldots,n\},\qquad \tau(1)=1.
\]
The black subsystem is of type \(B_{n-1}\), acting on the span of
\(e_2,\ldots,e_n\). Its longest element sends \(e_2\) to \(-e_2\). Therefore
\[
        \beta_1=w_\bullet^{-1}\alpha_1
        =
        w_\bullet^{-1}(e_1-e_2)
        =
        e_1+e_2.
\]
It follows that
\[
        \beta_1-\alpha_1
        =
        (e_1+e_2)-(e_1-e_2)
        =
        2e_2.
\]
The roots of type \(B_n\) are
\[
        \pm e_p,\qquad \pm e_p\pm e_q \quad (p\neq q).
\]
Thus \(2e_2\) is not a root. Hence \(\beta_1-\alpha_1\) is not a root.

\smallskip

\noindent
\emph{Type \(\mathrm{CII}\).}
Use the standard realization of type \(C_n\):
\[
        \alpha_1=e_1-e_2,\qquad
        \alpha_2=e_2-e_3,\qquad \ldots,\qquad
        \alpha_{n-1}=e_{n-1}-e_n,\qquad
        \alpha_n=2e_n.
\]
The local diagram has
\[
        i=2,\qquad I_\bullet=\{1,3,\ldots,n\},\qquad \tau(2)=2.
\]
The black subsystem is the product of the subsystem generated by \(\alpha_1\)
and the subsystem generated by \(\alpha_3,\ldots,\alpha_n\). Hence
\[
        \beta_2=w_\bullet^{-1}\alpha_2=e_1+e_3.
\]
Therefore
\[
        \beta_2-\alpha_2
        =
        (e_1+e_3)-(e_2-e_3)
        =
        e_1-e_2+2e_3.
\]
The roots of type \(C_n\) are
\[
        \pm 2e_p,\qquad \pm e_p\pm e_q \quad (p\neq q).
\]
The vector \(e_1-e_2+2e_3\) is of neither form. Hence
\(\beta_2-\alpha_2\) is not a root.

\smallskip

\noindent
\emph{Type \(\mathrm{DII}\).}
Use the standard realization of type \(D_n\):
\[
        \alpha_1=e_1-e_2,\qquad
        \alpha_2=e_2-e_3,\qquad \ldots,\qquad
        \alpha_{n-1}=e_{n-1}-e_n,\qquad
        \alpha_n=e_{n-1}+e_n.
\]
The local diagram has
\[
        i=1,\qquad I_\bullet=\{2,\ldots,n\},\qquad \tau(1)=1.
\]
The black subsystem is of type \(D_{n-1}\), acting on the span of
\(e_2,\ldots,e_n\). The relevant black longest element sends \(e_2\) to
\(-e_2\). Hence
\[
        \beta_1=w_\bullet^{-1}\alpha_1=e_1+e_2.
\]
Thus
\[
        \beta_1-\alpha_1
        =
        (e_1+e_2)-(e_1-e_2)
        =
        2e_2.
\]
The roots of type \(D_n\) are
\[
        \pm e_p\pm e_q \quad (p\neq q).
\]
Thus \(2e_2\) is not a root. Hence \(\beta_1-\alpha_1\) is not a root.

\smallskip

\noindent
\emph{Type \(\mathrm{FII}\).}
We use the standard Bourbaki realization of the root system of type \(F_4\):
\[
        \alpha_1=e_2-e_3,\qquad
        \alpha_2=e_3-e_4,\qquad
        \alpha_3=e_4,\qquad
        \alpha_4=\frac{1}{2}(e_1-e_2-e_3-e_4).
\]
The local diagram has
\[
        i=4,\qquad I_\bullet=\{1,2,3\},\qquad \tau(4)=4.
\]
The black subsystem generated by \(\alpha_1,\alpha_2,\alpha_3\) is of type
\(B_3\), acting on the span of \(e_2,e_3,e_4\). Its longest element sends
\(e_j\) to \(-e_j\) for \(j=2,3,4\), and fixes \(e_1\). Hence
\[
        \beta_4=w_\bullet^{-1}\alpha_4
        =
        \frac{1}{2}(e_1+e_2+e_3+e_4).
\]
Therefore
\[
        \beta_4-\alpha_4
        =
        \frac{1}{2}(e_1+e_2+e_3+e_4)
        -
        \frac{1}{2}(e_1-e_2-e_3-e_4)
        =
        e_2+e_3+e_4.
\]
The roots of \(F_4\) in this realization are
\[
        \pm e_p,\qquad
        \pm e_p\pm e_q\quad (p\neq q),
        \qquad
        \frac{1}{2}(\pm e_1\pm e_2\pm e_3\pm e_4).
\]
The vector \(e_2+e_3+e_4\) is not of any of these forms. Hence
\(\beta_4-\alpha_4\) is not a root.

\smallskip

In every non-\(\mathrm{AI}_1\) real-rank-one case, we have shown that
\(\beta_i-\alpha_i\) is not a root. By  \cite{SpringerLinearAlgebraicGroups}*{Proposition~8.2.3}, the
commutator of \(U_{-\alpha_i}\) and \(U_{\beta_i}\) is generated by root subgroups
corresponding to roots of the form
\[
        -r\alpha_i+s\beta_i,\qquad r,s>0.
\]
In the rank-two subsystem generated by \(-\alpha_i\) and \(\beta_i\), the only
possible obstruction is \(\beta_i-\alpha_i\). Since this is not a root in all the
cases above, the two root subgroups commute: $[U_{-\alpha_i},U_{\beta_i}]=1$.
\end{proof}

Therefore, when $(L_{I_i},K_{I_i})$ is of any real-rank-one type other than AI$_1$, we have 
\[
\iota(y_i(a) K) = y_{i}(a) \dot w^{-1}_\bullet x_{\tau(i)}(a) \dot w_\bullet = \iota(\dot w^{-1}_\bullet x_{\tau(i)}(a)K).
\]
We conclude that 
\[
y_i(a) K=\dot w^{-1}_\bullet x_{\tau(i)}(a)K = \dot w^{-1}_{i,\bullet} x_{\tau(i)}(a)K.
\]
It then follows immediately that 
\begin{equation}\label{eq:otherrankone}
y_i(a) \alpha_i^\vee(b)K=\dot w^{-1}_\bullet x_{\tau(i)}(a b^{2 + \langle \alpha_i^\vee, w_\bullet \alpha_{\tau(i)} \rangle }) \alpha^\vee_{\tau(i)}(b) K.
\end{equation}

\subsection{The special regular function}

The goal of this subsection is to prove the first and most crucial  assertion of Theorem~\ref{thm:main-positive-strata}.

\begin{proposition}\label{prop:positive-borels}
One has
\[
B^-_{>0}K/K=B^+_{>0}K/K.
\]
Moreover,
\[
\mathcal S_{\geq0}=\overline{B^+_{>0}K/K}=\overline{B^-_{>0}K/K}.
\]
\end{proposition}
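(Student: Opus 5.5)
We prove $B^-_{>0}K/K=B^+_{>0}K/K$ by showing both inclusions through generator-by-generator moves, using the rank-one exchange relations of \S\ref{subsec:Rank-One}.

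\textbf{Reduction to generators.} Since $\theta$ is the identity on the derived subgroup of $L_{I_\bullet}$, for $j\in I_\bullet$ we have $y_j(a)\in K$ and $x_j(a)\in K$. Moreover $\theta$ commutes with $\omega$, so $\omega$ induces an involution of $\mathcal S=G/K$ exchanging $B^+_{>0}K/K$ and $B^-_{>0}K/K$. It therefore suffices to prove the single inclusion $B^-_{>0}K/K\subset B^+_{>0}K/K$.

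\textbf{Inductive step.} Write an element of $B^-_{>0}$ as $y_{i_1}(a_1)\cdots y_{i_N}(a_N)t$, with $\mathbf i$ a reduced word of $w_0$ and $t\in T_{>0}$. We show by induction on $N$ that for any word $\mathbf i$ (not necessarily reduced), any $a_k>0$ and any $t\in T_{>0}$, the coset $y_{i_1}(a_1)\cdots y_{i_N}(a_N)tK$ lies in $B^+_{>0}K/K$. Since $B^+_{>0}$ is a monoid, it suffices to show that for every $b\in B^+_{>0}$ and every $i$ one has $y_i(a)\,bK\in B^+_{>0}K/K$. The induction then peels off the leftmost $y$ after the inner product has been converted into the form $b'K$.

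\textbf{Moving $y_i$ past $B^+_{>0}$.} Write $b=u\,t$ with $u\in U^+_{>0}$; more generally $u$ is a product of positive $x_j$'s. Using Lusztig's relations (commutation of $y_i$ with $x_j$ for $j\ne i$, and \eqref{eq:Luxy} for $j=i$), we push $y_i(a)$ to the right through the $x_j$'s and the torus factors, staying inside $G_{\ge0}$ with positive parameters. We end with $y_i(a)bK=b''\,y_i(a')\,t'K$, where $b''$ is a positive product of $x_j$'s and torus elements. For $i\in I_\bullet$ the factor $y_i(a')$ can be absorbed, since $y_i(a')t'=t'y_i(\cdot)$ and the latter lies in $K$. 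For $i\in I_\circ$ we apply \eqref{eq:AI} or \eqref{eq:otherrankone} to replace $y_i(a')t'K$ by $\dot w_\bullet^{-1}x_{\tau(i)}(a'')t''K$ with $a''>0$ (after writing $t'$ as a product of coroot values, using Lemma~\ref{lem:Tbar}). The torus factor is handled by commuting the part of $t'$ not in $\alpha_i^\vee$ across $y_i$. Finally, $\dot w_\bullet^{-1}$ acts trivially on $K$-cosets modulo the identification $\dot w_\bullet\in L_{I_\bullet}^{\rm der}\cdot T\subset KT$. Up to a torus correction, the result is a positive element of $B^+$.

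\textbf{Main obstacle.} The hardest step is this bookkeeping of $\dot w_\bullet^{-1}$ and the torus part. Concretely, we must check that $\dot w_\bullet \in K\cdot T_{>0}$-cosets compatibly, or else that $\dot w_\bullet^{-1}x_{\tau(i)}(a'')$ can be rewritten as an element of $U^+_{>0}$ times something in $K$. We first try to write $\dot w_\bullet^{-1} = k\,s$ with $k\in K$ derived-Levi and $s\in T$ of finite order, and then conjugate $x_{\tau(i)}$ by $k$. This is problematic, since $k$ sits on the wrong side. The fallback is to use the special regular function previewed in \S3.4: a $K$-invariant function $f$ on $G$, such as a product of generalized minors $\Delta_{\varpi}(g\theta(g)^{-1}\dot w^\bullet)$, which is positive on $G_{>0}$. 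Its nonvanishing characterizes $B^+K/K\cap B^-K/K$, and positivity on $B^-_{>0}K/K$ would give membership in the open $B^+$-orbit. Positive parametrization via $U_{P^+_{I_\bullet}}\times\overline T$ then identifies the coset with an element of $B^+_{>0}K/K$.

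\textbf{Closures.} We have $G_{>0}=U^+_{>0}T_{>0}U^-_{>0}$, so $G_{>0}K\subset B^+_{>0}\cdot(B^-_{>0}K)$, and $B^-_{>0}K/K=B^+_{>0}K/K$ by the first part. Hence $G_{>0}K/K\subset B^+_{>0}B^+_{>0}K/K=B^+_{>0}K/K$. Conversely, $B^+_{>0}$ lies in the closure of $G_{>0}$, since $y_i(\epsilon)\to 1$. Taking Hausdorff closures gives the equalities.
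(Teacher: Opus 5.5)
\textbf{Verdict: genuine gap.} Your argument breaks at exactly the point you flag as the main obstacle, and the fallback does not repair it.

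\textbf{The stuck factor.} After pushing $y_i(a)$ through $b$ and applying \eqref{eq:AI} or \eqref{eq:otherrankone}, you reach $b''\,\dot w_\bullet^{-1}x_{\tau(i)}(a'')t''K$. Nothing in your argument removes the $\dot w_\bullet^{-1}$.

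The fact that $\dot w_\bullet\in L^{\mathrm{der}}_{I_\bullet}\subset K$ lets you delete it only when it stands directly in front of $K$. Here it stands in front of $x_{\tau(i)}(a'')t''$. Moving it across replaces $x_{\tau(i)}(a'')$ by an element of the root subgroup $U_{\beta_i}$, where $\beta_i=w_\bullet^{-1}\alpha_{\tau(i)}$. This $\beta_i$ is a non-simple positive root whenever $\tau(i)$ is joined to a black node, which happens in types AII, AIV, BII, CII, DII and FII. The product of $b''$ with such a root-subgroup element lies in $B^+$, but there is no evident reason for it to lie in $U^+_{>0}T_{>0}$.

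The source of the problem is your choice to absorb every $y_j$ with $j\in I_\bullet$ directly into $K$. Those are exactly the factors that could cancel $\dot w_\bullet^{-1}$.

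\textbf{Why the fallback fails.} There are two separate problems.
\begin{enumerate}
\item It is circular. In the paper, the bound $\Pi^\lambda_{w^\bullet}\ge 1$ on $G_{>0}K/K$ (Theorem~\ref{thm::(1)}) is proved \emph{using} this proposition. Invoking it here requires an independent positivity proof, and you give none.
\item It is too weak. Even nonvanishing of such a function only yields $B^-_{>0}K/K\subset B^+K/K\cong U_{P^+_{I_\bullet}}\times\overline T$. The claim that the $U_{P^+_{I_\bullet}}$-coordinate is then totally positive is precisely the content of the proposition, so your final step is unsupported.
\end{enumerate}

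\textbf{The missing idea.} You need to keep a block of black $y$'s in front of each white letter, so that it can absorb $\dot w_\bullet^{-1}$. The paper does this as follows.
\begin{enumerate}
\item It writes elements of $B^-_{>0}$ using the non-reduced word $(\mathbf w_\bullet,s_{i_1},\mathbf w_\bullet,s_{i_2},\dots,\mathbf w_\bullet,s_{i_n})$ with $i_r\in I_\circ$. This word has Demazure product $w_0$, which is allowed by \S\ref{sec:Lutp}(a).
\item Working from the right, the rank-one relation turns $y_{i_n}(a_n)tK$ into $\dot w_\bullet^{-1}x_{\tau(i_n)}(a_n')t'K$.
\item The identity $y_j(b)\dot s_j^{-1}=x_j(b^{-1})y_j(-b)\alpha_j^\vee(b^{-1})$ is then applied letter by letter, together with the exchange relations of \S\ref{sec:Lutp}. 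This converts $y_{\mathbf w_\bullet}(\mathbf b^{(n)})\dot w_\bullet^{-1}$ into positive $x_j$'s and positive torus factors.
\item The leftover factors lie in negative root subgroups of $L_{I_\bullet}$. These commute with $x_{\tau(i_n)}$ and hence disappear into $K$.
\item The result is $x_{\mathbf w_\bullet}(\mathbf b')x_{\tau(i_n)}(a_n'')t''K$ with all parameters positive. These $x$'s are moved to the far left through the remaining $y$'s by Lusztig's exchange relations, and only then is the next white letter processed.
\end{enumerate}

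\textbf{What is fine.} Your use of $\omega$ to obtain the reverse inclusion is correct. Your closure argument is also correct and agrees with the paper.
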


\begin{proof}We first show $B^-_{>0}K/K\subset B^+_{>0}K/K$. Fix a reduced expression $\mathbf w_\bullet=s_{j_1}\cdots s_{j_m}$ of $w_\bullet$. For a tuple $\mathbf b=(b_1,\ldots,b_m)$ with all entries positive, write
\[
y_{\mathbf w_\bullet}(\mathbf b)=y_{j_1}(b_1)\cdots y_{j_m}(b_m),\qquad x_{\mathbf w_\bullet}(\mathbf b)=x_{j_1}(b_1)\cdots x_{j_m}(b_m).
\]
Choose a (not necessarily reduced) expression
\[
\mathbf w=(\mathbf w_\bullet, s_{i_1}, \mathbf w_\bullet, s_{i_2}, \cdots , \mathbf w_\bullet, s_{i_n}),\qquad i_r\in I_\circ,
\]
such that the Demazure product is $w_0$. By \S\ref{sec:Lutp} (a), we can write an arbitrary element in $B^-_{>0}K/K$ as 
\[
y_{\mathbf w_\bullet}(\mathbf b^{(1)})y_{i_1}(a_1)y_{\mathbf w_\bullet}(\mathbf b^{(2)})y_{i_2}(a_2)\cdots y_{\mathbf w_\bullet}(\mathbf b^{(n)})y_{i_n}(a_n)tK \in B^-_{>0}K,
\]
with $\mathbf b^{(r)}=(b^{(r)}_1,\ldots,b^{(r)}_m) \in \mathbb{R}_{>0}^{\ell(w_\bullet)}$, $a_r \in \mathbb {R}_{>0}$, and $t \in T_{>0}$. 

Then by the rank-one identities  in \S\ref{subsec:Rank-One}, we obtain that 
\begin{align*}
&y_{\mathbf w_\bullet}(\mathbf b^{(1)})y_{i_1}(a_1)\cdots y_{\mathbf w_\bullet}(\mathbf b^{(n)})y_{i_n}(a_n)tK  \\
=& y_{\mathbf w_\bullet}(\mathbf b^{(1)})y_{i_1}(a_1)\cdots y_{\mathbf w_\bullet}(\mathbf b^{(n)}) \dot{w}_\bullet^{-1} x_{\tau(i_n)}(a_n') t' K, \quad \text{ for } a_n' \in \mathbb{R}_{>0} \text{ and } t \in T_{>0}.
\end{align*}
By direct computation, we have $y_{j_m}(b^{(n)}_m) \dot{s}_{j_m}^{-1} = x_{j_m}((b^{(n)}_m)^{-1}) y_{j_m}(- b^{(n)}_m) \alpha_{j_m}^\vee((b^{(n)}_m)^{-1})$.  The root subgroups $U^-_{w_\bullet s_{j_m} (\alpha_{j_m})}$ and $U^+_{\alpha_{\tau(i_n)}}$ commute by \cite{SpringerLinearAlgebraicGroups}*{Proposition~8.2.3}. Since  $U^-_{\beta_{j_m}} \subset L_{I_\bullet} \subset K$, we have
\begin{align}
 \label{eq:xw}  &y_{j_m}(b^{(n)}_m) \dot{w}_\bullet^{-1} x_{\tau(i_n)}(a'_n) t'K \\
 \notag = &x_{j_m}((b^{(n)}_m)^{-1}) \dot{s}_{j_m} \dot{w}_\bullet^{-1} x_{\tau(i_n)}( (b^{(n)}_m)^{-\langle w_\bullet s_{j_m}(\alpha_{j_m}^\vee), \alpha_{\tau(i_n)} \rangle} a'_n) t'K.
\end{align}

We can apply the exchange relations in  \S\ref{sec:Lutp}, and continue the process for the remaining factors in $ y_{\mathbf w_\bullet}(\mathbf b^{(n)})$, we obtain 
\begin{align*}
&y_{\mathbf w_\bullet}(\mathbf b^{(1)})y_{i_1}(a_1)\cdots y_{\mathbf w_\bullet}(\mathbf b^{(n)}) \dot{w}_\bullet^{-1} x_{\tau(i_n)}(a_n') t' K \\
= &y_{\mathbf w_\bullet}(\mathbf b^{(1)})y_{i_1}(a_1) \cdots  y_{\mathbf w_\bullet}(\mathbf b^{(n-1)}) y_{i_{n-1}}(a_{n-1})x_{\mathbf w_\bullet}( (\mathbf b')^{(n)})  x_{\tau(i_n)}(a_n'') t'' K,
\end{align*}
where $(\mathbf b')^{(n)} \in \mathbb{R}_{>0}^{\ell(w_\bullet)}$, $a_{n''} \in \mathbb{R}_{>0}$, and $t'' \in T_{>0}$.

Then by identities in \S\ref{sec:Lutp}, we further have 
\begin{align*}
&y_{\mathbf w_\bullet}(\mathbf b^{(1)})y_{i_1}(a_1) \cdots  y_{\mathbf w_\bullet}(\mathbf b^{(n-1)}) y_{i_{n-1}}(a_{n-1})x_{\mathbf w_\bullet}( (\mathbf b')^{(n)})  x_{\tau(i_n)}(a_n'') t'' K  \\
= &x_{\mathbf w_\bullet}( (\mathbf b'')^{(n)})  x_{\tau(i_n)}(a_n''') y_{\mathbf w_\bullet}((\mathbf b'')^{(1)})y_{i_1}(a''_1) \cdots  y_{\mathbf w_\bullet}((\mathbf b'')^{(n-1)}) y_{i_{n-1}}(a''_{n-1}) t'''K.
\end{align*}
Here all coordinates are in $\mathbb{R}_{>0}$ and  $t''' \in T_{>0}$.  Therefore, by repeating the process, we have 
\begin{align*}
&y_{\mathbf w_\bullet}(\mathbf b^{(1)})y_{i_1}(a_1) \cdots y_{\mathbf w_\bullet}(\mathbf b^{(n)})y_{i_n}(a_n)tK \\
= 
&x_{\mathbf w_\bullet}(\mathbf c^{(n)})x_{\tau(i_n)}(c_n) \cdots x_{\mathbf w_\bullet}(\mathbf c^{(1)})x_{\tau(i_1)}(c_1 ) s K.
\end{align*}
Here $\mathbf c^{(r)} \in \mathbb{R}_{>0}^{\ell(w_\bullet)}$, $c_i \in \mathbb{R}_{>0}$, and $s \in T_{>0}$.  Hence $B^-_{>0}K/K\subset B^+_{>0}K/K$. The opposite inclusion is obtained symmetrically. Therefore $B^-_{>0}K/K=B^+_{>0}K/K$.

Now we have 
\[
G_{>0}K/K = B^-_{>0} U^+_{>0}K/K \subset B^-_{>0} B^-_{>0} K/K \subset  B^-_{>0}K/K =B^+_{>0}K/K.
\]
It is clear from the definition that $B^-_{>0}K/K=B^+_{>0}K/K\subset(G/K)_{\geq0}$. Taking Hausdorff closures gives the desired equality.
\end{proof}

Let $\lambda$ be a regular dominant weight, and let $V(\lambda)$ be the irreducible highest-weight $G$-module of highest weight $\lambda$. Fix a  highest weight vector $\eta_\lambda \in V(\lambda)$ and a  lowest weight vector $\xi_{-\lambda} \in V(-w_0\lambda)$. For each $w\in W$, we fix nonzero extremal weight vectors
\[
\eta_{w\lambda} = \dot{w} \cdot \eta_\lambda \in V(\lambda)_{w\lambda},\qquad \xi_{-w\lambda} = \dot{w} \cdot\xi_{-\lambda} \in V(-w_0\lambda)_{-w\lambda}.
\]
We use the generalized-minor notation of Marsh--Rietsch \cite{MarshRietschParametrizationsFlagVarieties}*{\S 7}. Thus $\Delta^{u\lambda}_{v\lambda}(g)$
denotes the coefficient of $\eta_{u\lambda}$ in $g\eta_{v\lambda}$. Similarly, we write $\nabla^{-u\lambda}_{-v\lambda}(g)$ for the coefficient of $\xi_{-u\lambda}$ in $g\xi_{-v\lambda}$. For $w\in W$, define the regular function $\Pi^\lambda_w:G\to\mathbb A^1$ by
\[
\Pi^\lambda_w(g)=\Delta^{w\lambda}_{w\lambda}(g)\,\nabla^{-w\lambda}_{-w\lambda}(g).
\]
Via the embedding $G/K \rightarrow G$, the function $\Pi^\lambda_w$ induces a function on $G/K$, still denoted by $\Pi^\lambda_w$:
\[
\Pi^\lambda_w(gK)=\Pi^\lambda_w(g\theta(g)^{-1}).
\]

Now let us give a new proof of an essential part of Lusztig's theorem \cite{LuTotalPositivity}*{Theorem~4.3} without using his theory of canonical bases. This theorem can be seen as a special case of Theorem~\ref{thm::(1)}.

\begin{thm}\label{thm:Lu4.3}
$\overline{G_{\ge 0}} \subset (U^- T U^+) \cap (U^+TU^-)$. 
\end{thm}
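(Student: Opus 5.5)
The plan is to use the ``special regular function'' just introduced, specialized to the group case. The key observation is that $U^-TU^+$ is exactly the locus where all principal minors $\Delta^{\lambda}_{\lambda}$ are nonzero. Indeed, $g\in B^-B^+$ if and only if $\Delta^{\omega_i}_{\omega_i}(g)\neq0$ for all fundamental weights $\omega_i$, equivalently $\Delta^\lambda_\lambda(g)\neq 0$ for a regular dominant $\lambda$. Similarly, $U^+TU^-$ is the locus where $\Delta^{w_0\lambda}_{w_0\lambda}(g)\neq0$; this minor is the coefficient of the lowest weight vector, and equals $\nabla^{-\lambda}_{-\lambda}$ up to the duality $V(\lambda)^*\cong V(-w_0\lambda)$.

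So it suffices to show that $\Delta^\lambda_\lambda$ and $\Delta^{w_0\lambda}_{w_0\lambda}$ are bounded away from zero on $\overline{G_{\ge0}}$ in a suitable sense. The naive statement is false, since $\overline{G_{\geq 0}}$ is not compact. Instead I would exploit multiplicativity along the monoid structure. For $g$ in the submonoid generated by $T_{>0}$ and $x_i(a),y_i(a)$ with $a\ge0$, I will prove the following by induction on word length: writing $g=u^-tu^+$, one gets $g\cdot x_i(a)=u^-t\,(u^+x_i(a))$ and $g\cdot y_i(a)=u^-t u^+ y_i(a)$. Moving $y_i(a)$ leftward past $u^+$ uses the rank-one relation \eqref{eq:Luxy} together with the commutation $y_i(a)x_j(c)=x_j(c)y_i(a)$ for $i\ne j$.

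A cleaner way to run this step is directly on minors. I would show that $\Delta^\lambda_\lambda(g)>0$ on $G_{\ge0}$, by checking that $\Delta^\lambda_\lambda(g\,y_i(a))$ and $\Delta^\lambda_\lambda(y_i(a)g)$ are polynomials in $a$ with nonnegative coefficients and constant term $\Delta^\lambda_\lambda(g)$. This holds because $y_i(a)$ lowers weight, so $\langle\eta_\lambda^*,y_i(a)g\eta_\lambda\rangle=\langle\eta_\lambda^*,g\eta_\lambda\rangle$. For right multiplication by $x_i(a)$, $x_i(a)\eta_\lambda=\eta_\lambda$ since $\eta_\lambda$ is highest weight. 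For the remaining cases (left multiplication by $x_i$, right by $y_i$) the highest-weight line is again stable for the relevant transpose action. Thus $\Delta^\lambda_\lambda$ is multiplicative in the sense
\[
\Delta^\lambda_\lambda(u^-tu^+)=\lambda(t),
\]
and it never vanishes on the monoid. This is the standard argument with no canonical bases needed. Symmetrically, the same holds for $\Delta^{w_0\lambda}_{w_0\lambda}$.

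The main obstacle is passage to the closure $\overline{G_{\ge0}}$. Here I would use the special regular function. On $G_{\geq 0}$, consider $\Pi^\lambda_e(g)=\Delta^\lambda_\lambda(g)\nabla^{-\lambda}_{-\lambda}(g)$. I would argue, via the Example identifying $G$ with $\tilde G/\tilde K$, that $\Pi$ factors as $\lambda(t)\cdot\lambda(t)^{-1}$ times a product of positive-coefficient polynomials. In that picture, $\nabla^{-\lambda}_{-\lambda}(g)$ evaluated at $g\,\omega'(g)^{-1}$-type elements yields
\[
\Pi^\lambda_e\ \ge\ 1
\]
on $G_{\ge0}$. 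For instance, on $G_{>0}$ one has $\Pi^\lambda_e(u^-tu^+)=1+(\text{sum of nonnegative terms})$, by expanding $g\eta_\lambda$ and pairing through the $\omega$-twisted form using positivity of $x_i,y_i$ matrix coefficients in a basis of extremal and PBW-type vectors. This lower bound survives Hausdorff limits. The factor $\Delta^\lambda_\lambda$, being positive on $G_{\ge0}$, stays $\ge0$ on the closure, and the inequality $\Pi\ge1$ forces both factors to be nonzero there. Hence every point of $\overline{G_{\ge0}}$ lies in $U^-TU^+$, and by the $\omega$-symmetric argument in $U^+TU^-$. I expect establishing the uniform lower bound $\Pi\ge1$, in particular its cross terms, to be the delicate step. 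There I would reduce to rank one via \eqref{eq:Luxy}, checking that each generator multiplies $\Pi$ by a factor $\ge1$ (e.g.\ $1+a c/b^2$).
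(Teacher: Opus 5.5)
Your architecture matches the paper's. You prove $\Pi^\lambda_e=\Delta^\lambda_\lambda\nabla^{-\lambda}_{-\lambda}\ge1$ on the positive part, pass to the Hausdorff closure by continuity, and use that $\Delta^\lambda_\lambda$ (resp.\ $\nabla^{-\lambda}_{-\lambda}$) is nonzero exactly on $U^-TU^+$ (resp.\ $U^+TU^-$). That last part is fine, and so is your normal-form paragraph giving $G_{\ge0}\subset U^-T_{>0}U^+$.

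The gap is that the inequality $\Pi^\lambda_e\ge1$, which carries the whole proof, is never established, and the justifications you lean on do not work.
\begin{itemize}
\item The Example with $\tilde G/\tilde K$ supplies no inequality. Its symmetric-space version, Theorem~\ref{thm::(1)}, is proved by reducing to exactly this computation, so routing through it gains nothing.
\item Expanding $g\eta_\lambda$ in ``extremal and PBW-type'' vectors with nonnegative matrix coefficients for $x_i(a),y_i(a)$ is not available in general. Such positivity is essentially the canonical-basis input this proof is designed to bypass.
\item Your ``cleaner way'' paragraph contains a false claim. The highest-weight covector is not stable under left multiplication by $x_i$ or right multiplication by $y_i$: a rank-one computation gives $\Delta^\lambda_\lambda(x_i(c)y_i(a))=(1+ac)^{\langle\alpha_i^\vee,\lambda\rangle}$. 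This growth is precisely the source of $\Pi^\lambda_e>1$, so it cannot be treated as invisible. The error is harmless for positivity of $\Delta^\lambda_\lambda$ itself, since $\Delta^\lambda_\lambda(u^-tu^+)=\lambda(t)$.
\end{itemize}

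What closes the gap is the rank-one bookkeeping you mention only in your last sentence. It must be organized so that one factor of $\Pi^\lambda_e$ is normalized and the other collects explicit torus factors. Note that $\Pi^\lambda_e(gt)=\Pi^\lambda_e(g)$ for $t\in T$, and if $u^-tu^+=u_1^+t_1u_1^-$ then $\Pi^\lambda_e(u^-tu^+)=\lambda(t)\lambda(t_1)^{-1}$.

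The paper writes $s\in G_{>0}$ as $s=s't$ with $s'=y_{i_1}(a_1)\cdots y_{i_n}(a_n)x_{i_n}(b_n)\cdots x_{i_1}(b_1)$, so $\Pi^\lambda_e(s)=\nabla^{-\lambda}_{-\lambda}(s')$. It then reorders $s'$ into $U^+U^-$ form by \eqref{eq:Luxy}. Each exchange $y_i(a)x_i(c)=x_i(\cdot)\alpha_i^\vee((1+ac)^{-1})y_i(\cdot)$ creates a torus factor, which is pushed to the right end; this only rescales later parameters by positive numbers. The result is that $\nabla^{-\lambda}_{-\lambda}$ of the word equals $(1+ac)^{\langle\alpha_i^\vee,\lambda\rangle}\ge1$ times $\nabla^{-\lambda}_{-\lambda}$ of the reordered word. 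At the end, $\nabla^{-\lambda}_{-\lambda}(u^+u^-)=1$.

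Your ``each generator'' version also works once made precise, and it covers $G_{\ge0}$ directly:
\begin{itemize}
\item Right multiplication by $y_i(a)$ fixes $\nabla^{-\lambda}_{-\lambda}$, since $y_i(a)\xi_{-\lambda}=\xi_{-\lambda}$. Pushing $y_i(a)$ leftward through $u^+$ in $u^-tu^+$ produces factors $\alpha_i^\vee(1+ac)$ with $a,c\ge0$, so $\Delta^\lambda_\lambda$ is multiplied by a number $\ge1$.
\item Symmetrically, right multiplication by $x_i(a)$ fixes $\Delta^\lambda_\lambda$ and, via the $U^+TU^-$ normal form, multiplies $\nabla^{-\lambda}_{-\lambda}$ by a number $\ge1$.
\end{itemize}
Since $\Pi^\lambda_e(1)=1$, induction on word length gives $\Pi^\lambda_e\ge1$ on $G_{\ge0}$. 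In neither version are there ``cross terms'' to control: $\Pi^\lambda_e$ comes out as a product of factors $(1+ac)^{\langle\alpha_i^\vee,\lambda\rangle}$.
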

\begin{proof}We show $\overline{G_{\ge 0}} \subset U^- T U^+$. The inclusion $\overline{G_{\ge 0}} \subset U^+ T U^-$ can be proved similarly. 

Following Lusztig, it suffices to show $\Pi^\lambda_e(s) \geq 1$ for any $ s \in G_{>0}$, and $\Pi^\lambda_e(g) = 0$ for any $g \in B^+\dot s_iB^-$ or $g \in  B^-\dot s_iB^+$. 

Let $\mathbf{w}_0 = s_{i_1} \dots s_{i_n}$ be a reduced expression of $w_0$. We can then write an arbitrary element $s \in G_{>0}$ as the product ($a_i, b_i, t_i > 0$)
\[
y_{i_1}(a_1) y_{i_2}(a_2) \cdots y_{i_n}(a_n) x_{i_n}(b_n)  \cdots x_{i_1}(b_1) \alpha_{1}^\vee(t_1) \alpha_{2}^\vee(t_2) \dots \alpha_{m}^\vee(t_m).
\]

{\it (a) We claim $\Pi^\lambda_e(s) \ge 1$. } 

Let $s' = y_{i_1}(a_1) y_{i_2}(a_2) \cdots y_{i_n}(a_n) x_{i_n}(b_n)  \cdots x_{i_1}(b_1)$. It is straightforward that $\Pi^\lambda_e(s) = \Pi^\lambda_e(s') = \nabla^{-\lambda}_{-\lambda}(s')$. 

We compute 
\begin{align*}
&\nabla^{-\lambda}_{-\lambda}(y_{i_1}(a_1) y_{i_2}(a_2) \cdots y_{i_n}(a_n) x_{i_n}(b_n)  \cdots x_{i_1}(b_1)  ) \\
= & \nabla^{-\lambda}_{-\lambda}(y_{i_1}(a_1) y_{i_2}(a_2) \cdots y_{i_{n-1}}(a_{n-1}) x_{i_n}(\frac{b_n}{a_nb_n +1}) \alpha_{i_n}^\vee(\frac{1}{a_nb_n+1}) y_{i_n}(\frac{a_n}{a_nb_n +1}) \\
&\cdot x_{i_{n-1}}(b_{n-1}) \cdots x_{i_1}(b_1)) \\
= & \nabla^{-\lambda}_{-\lambda}(y_{i_1}(a_1) y_{i_2}(a_2) \cdots y_{i_{n-1}}(a_{n-1}) x_{i_n}(\frac{b_n}{a_nb_n +1})  y_{i_n}(c_n) \\
&\cdot x_{i_{n-1}}(c_{n-1}) \cdots x_{i_1}(c_1)) \alpha_{i_n}^\vee(\frac{1}{a_nb_n+1}).
\end{align*}

Note that $\alpha_{i_n}^\vee(\frac{1}{a_nb_n+1}) \xi_{-\lambda} = ({a_nb_n+1})^{\langle \alpha_{i_n}^\vee, \lambda\rangle } \xi_{-\lambda}$. We know $\langle \alpha_{i_n}^\vee, \lambda\rangle \ge 1$, since $\lambda $ is regular dominant. We conclude that $({a_nb_n+1})^{\langle \alpha_{i_n}^\vee, \lambda\rangle } \ge 1$, since $a_n, b_n > 0$. Therefore
\begin{align*}
&\nabla^{-\lambda}_{-\lambda}(y_{i_1}(a_1) y_{i_2}(a_2) \cdots y_{i_n}(a_n) x_{i_n}(b_n)  \cdots x_{i_1}(b_1)  ) \\
\ge & \nabla^{-\lambda}_{-\lambda}(y_{i_1}(a_1) y_{i_2}(a_2) \cdots y_{i_{n-1}}(a_{n-1}) x_{i_n}(\frac{b_n}{a_nb_n +1})  y_{i_n}(c_n) \\
&\cdot x_{i_{n-1}}(c_{n-1}) \cdots x_{i_1}(c_1)) \\
\ge & \nabla^{-\lambda}_{-\lambda} (x_{i_n}(b'_n)  \cdots x_{i_1}(b'_1) y_{i_1}(a'_1) y_{i_2}(a_2) \cdots y_{i_n}(a'_n)) =1.
\end{align*}
This shows $(a)$. 

{\it (b) We claim $\Pi^\lambda_e(g) = 0 $ for any simple reflection $s_i$, and for $g \in B^+\dot s_iB^-$ or $g \in B^-\dot s_iB^+$.} 

If $g \in B^-\dot s_iB^+$, then we have $\Delta^{\lambda}_{\lambda}(g) = 0$. If $g \in B^+\dot s_iB^-$, then we have $\nabla^{-\lambda}_{-\lambda}(g) = 0$. This shows $(b)$. 

This completes the proof.
\end{proof}

We now prove the generalization to symmetric spaces. 

\begin{theorem}\label{thm::(1)}
The totally nonnegative symmetric space satisfies
\[
\mathcal S_{\geq0}\subset(B^+K/K)\cap(B^-K/K).
\]
\end{theorem}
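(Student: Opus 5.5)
Following the proof of Theorem~\ref{thm:Lu4.3}, we work with the special regular function $\Pi^\lambda_e$ on $G/K$, defined by $\Pi^\lambda_e(gK)=\Pi^\lambda_e(g\theta(g)^{-1})$. However, we shift it by $w^\bullet$ so that it detects the open orbits. The point is that $gK\in B^+K/K$ means $g\theta(g)^{-1}\in B^+\theta(B^+)=B^+\dot w^\bullet B^+\dot w^{\bullet,-1}$. Similarly, $gK\in B^-K/K$ means $g\theta(g)^{-1}\in B^-\dot w^\bullet B^-\dot w^{\bullet,-1}$. So we consider the functions
\[
F_+(gK)=\Delta^{\lambda}_{w^\bullet\lambda}\bigl(g\theta(g)^{-1}\dot w^\bullet\bigr),\qquad F_-(gK)=\nabla^{-\lambda}_{-w^\bullet\lambda}\bigl(g\theta(g)^{-1}\dot w^\bullet\bigr),
\]
suitably normalized (equivalently $\Pi^\lambda_{w}$-type products after translating by $\dot w^\bullet$). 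The first step is to check that these are $K$-invariant regular functions on $\mathcal S$. The second step is to check that $F_+$ (resp.\ $F_-$) vanishes exactly on the complement of the open $B^+$-orbit (resp.\ $B^-$-orbit). This uses the Bruhat decomposition and the fact that, for regular dominant $\lambda$, the minor $\Delta^{\lambda}_{w\lambda}$ is nonzero on $B^-\dot w^{-1}$-type cells and zero on the smaller ones. Thus it suffices to show a uniform lower bound: $F_\pm\ge c>0$ on $G_{>0}K/K$, or more realistically, that $F_\pm$ is bounded below by a positive quantity that persists under Hausdorff limits.

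The key reduction uses Proposition~\ref{prop:positive-borels}. Every point of $G_{>0}K/K$ lies in $B^-_{>0}K/K=B^+_{>0}K/K$, and $\mathcal S_{\ge0}$ is the closure of $B^+_{>0}K/K$. Take $gK=utK$ with $u\in U^+_{>0}$ and $t\in T_{>0}$. Then $F_+(gK)$ reduces to a torus character evaluated at $t\theta(t)^{-1}$, multiplied by $\Delta$ of $u\,\dot w^\bullet\omega$-twisted data. Here $\theta(u)\in\dot w^\bullet U^-\dot w^{\bullet,-1}$ contributes trivially to the relevant extremal coefficient. The torus factor could tend to $0$, so the function must be normalized. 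I would therefore take the ratio-free product
\[
\Pi(gK)=F_+(gK)\cdot F_-(gK),
\]
in analogy with $\Pi^\lambda_e=\Delta\nabla$, where the $T$-dependence cancels because the weights $\lambda$ and $-\lambda$ appear. Then $\Pi$ is $\overline T$-invariant, and we aim to prove $\Pi\ge1$ on $B^\pm_{>0}K/K$ after normalization. This mimics claim (a) in Theorem~\ref{thm:Lu4.3}.

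The main step, and the main obstacle, is the inequality $\Pi(gK)\ge1$. The plan is to write $gK$ using the expression of Proposition~\ref{prop:positive-borels}, namely
\[
gK=y_{\mathbf w_\bullet}(\mathbf b^{(1)})y_{i_1}(a_1)\cdots y_{i_n}(a_n)tK,
\]
and then move the last factor across via the rank-one identities \eqref{eq:AI} and \eqref{eq:otherrankone}, peeling off one factor at a time exactly as in the proof of claim (a). At each step, the rank-one conversion $y_i(a)\alpha_i^\vee(b)K=\dot w_\bullet^{-1}x_{\tau(i)}(a')\alpha^\vee_{\tau(i)}(b')K$ changes the torus part by a factor like $(1+a^2b^4)^{-1/2}$ in type AI$_1$, or by no factor in the other types. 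Paired with the weight $\lambda$ (regular dominant), this factor should contribute a multiplier $\ge1$ to $\Pi$, just as $(a_nb_n+1)^{\langle\alpha^\vee,\lambda\rangle}\ge1$ did before. Meanwhile the $\mathbf w_\bullet$ factors lie in $L_{I_\bullet}$, whose derived group is fixed by $\theta$, so they should be handled by the group-case argument of Theorem~\ref{thm:Lu4.3} applied to the Levi. After all factors are moved, we reach an element of the form $x\cdots x\,tK$, on which $\Pi$ equals $1$. Getting the sign and exponent bookkeeping right here is where I expect difficulty. In particular, we must verify that the twisting by $\dot w_\bullet$ and $\tau$ pairs the torus factor with $\lambda$ and not $-\lambda$, and we must do the case check over Table~\ref{tab:rank-one-satake}.

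Once $\Pi\ge1$ holds on $B^+_{>0}K/K$, continuity gives $\Pi\ge1$ on its Hausdorff closure $\mathcal S_{\ge0}$. In particular $F_+\ne0$ and $F_-\ne0$ there. By the vanishing characterization of the second step, this places $\mathcal S_{\ge0}$ inside $(B^+K/K)\cap(B^-K/K)$.
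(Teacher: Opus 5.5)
Your architecture matches the paper's. You pull back a product of a $\Delta$-type and a $\nabla$-type extremal minor along $\iota$, show it is $\ge 1$ on $G_{>0}K/K$ using Proposition~\ref{prop:positive-borels} and the rank-one identities, show it vanishes off $(B^+K/K)\cap(B^-K/K)$, and conclude by continuity.

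\textbf{The gap.} The decisive inequality $\Pi\ge 1$ is not established, and the mechanism you sketch for it does not work.
\begin{itemize}
\item Every rewriting produced by \eqref{eq:AI}, \eqref{eq:otherrankone} and \eqref{eq:Luxy} describes the \emph{same} point of $G/K$. So there are no ``multipliers of $\Pi$'' to collect.
\item Unlike the group case, a torus factor $t$ on the right of $gtK$ sits in the middle of $\iota(gt)=g\,t\theta(t)^{-1}\theta(g)^{-1}$. It cannot be split off as a scalar.
\item It is false that $\Pi=1$ on $x\cdots x\,tK$. Already for $\mathrm{SL}_2/\mathrm{SO}_2$ with $\lambda$ the fundamental weight, where $\iota(g)=gg^{T}$, one gets $\Pi(x_1(a)K)=1+a^2$.
\end{itemize}

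\textbf{The missing idea.} This is what the paper does: evaluate the two factors on two different representatives of the same point. One factor is identically $1$ on $U^-K/K$. The other restricts on $U^+TK/K$ to a character of the torus part.
\begin{itemize}
\item Since $\Pi$ is invariant under the left $T$-action on $G/K$, you may take $gK=bK$ with $b\in U^-_{>0}$.
\item Proposition~\ref{prop:positive-borels} then gives $bK=usK$ with $u\in U^+_{>0}$ and $s\in T_{>0}$.
\item Hence $\Pi(gK)=\chi(s)$ with $\chi=\mu-\theta(\mu)$ for an extremal weight $\mu$. For $\mu=w^\bullet\lambda$ one finds $\chi=w_0(w_\bullet\lambda+\tau\lambda)$.
\end{itemize}
Then $\Pi\ge 1$ reduces to two checks you have not made.
\begin{itemize}
\item Modulo $K$, $s$ is a product of $\alpha_j^\vee(c_j)$ with $0<c_j\le 1$. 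This comes from the factor $(1+a^2b^4)^{-1/2}$ in \eqref{eq:AI} and $(1+ac)^{-1}$ in \eqref{eq:Luxy}, with nothing new from \eqref{eq:otherrankone}; black coroots lie in $K$.
\item $\langle\alpha_j^\vee,\chi\rangle<0$ for $j\in I_\circ$. This is exactly the ``$\lambda$ versus $-\lambda$'' question you left open.
\end{itemize}

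\textbf{Your explicit functions.} They are not the ones you describe. Since $\dot w^\bullet\eta_{w^\bullet\lambda}=(\dot w^\bullet)^2\eta_\lambda$ is a multiple of $\eta_\lambda$, your $F_+$ equals $\pm\Delta^\lambda_\lambda\circ\iota$. This is identically $\pm1$ on $U^-K/K$ and does not vanish on the complement of $B^+K/K$. For $\mathrm{SL}_2/\mathrm{SO}_2$ it is, up to sign, the entry $x_{11}$ of $x=\iota(g)=gg^{T}$, whereas that complement is $\{x_{22}=0\}$. Similarly $F_-=\pm\nabla^{-\lambda}_{-\lambda}\circ\iota$. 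The function detecting $B^+\theta(B^+)$ is $\Delta^{w^\bullet\lambda}_{\lambda}(\iota(\cdot)\,\dot w^\bullet)=\Delta^{w^\bullet\lambda}_{w^\bullet\lambda}\circ\iota$, which is the paper's factor. Also, $\theta(U^+)=\dot w^\bullet U^+\dot w^{\bullet,-1}$, not $\dot w^\bullet U^-\dot w^{\bullet,-1}$.

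\textbf{The vanishing step.} The Bruhat decomposition only tells you the minor vanishes on the smaller cells $B^+\dot yB^+\dot w^{\bullet,-1}$ with $y<w^\bullet$. You still need that $\iota$ sends every point outside $B^+K/K$ into such a cell. That is the converse of ``$gK\in B^+K/K\Rightarrow \iota(gK)\in B^+\theta(B^+)$'', which your word ``means'' takes for granted. The paper obtains it from Springer's description of the codimension-one $B^+$-orbits together with a closure argument.
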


\begin{proof}
The theorem follows from the following two claims. 

{\it (a) For $g\in G_{>0}$, we have $\Pi^\lambda_{w^\bullet}(gK)\geq1$.} 

{\it (b) We have $\Pi^\lambda_{w^\bullet}(gK)=0$ for $gK\notin B^+K/K$ or $gK\notin B^-K/K$.}

We first show claim (a). The idea of the proof is the same as the proof of Theorem~\ref{thm:Lu4.3} using results from \S\ref{subsec:Rank-One}. 

By Proposition~\ref{prop:positive-borels}, we know $gK=g'K$ for some $g'\in B^-_{>0}$. Let $w_0 = s_{i_1}\cdots s_{i_n}$ be reduced. Then we can write
\[
g'=bt, \quad \text{ with } b=y_{i_1}(a_1)\cdots y_{i_n}(a_n), a_r>0, t\in T_{>0}.
\]
Note that $b \theta(b)^{-1} \in U^- \dot{w}^\bullet U^+ \dot{w}^{\bullet, -1}$. Therefore $\Delta^{w^\bullet\lambda}_{w^\bullet\lambda} (b \theta(b)^{-1}) =1$. Hence $\Pi^\lambda_{w^\bullet}(g'K) = \Pi^\lambda_{w^\bullet}(bK) = \nabla^{-w^\bullet\lambda}_{-w^\bullet\lambda} (b \theta(b)^{-1})$. 

By Proposition~\ref{prop:positive-borels} again, we can find
\[
c=x_{i_1}(d_1)x_{i_2}(d_2)\cdots x_{i_n}(d_n)s\in B^+_{>0}, \text{ with $s \in T_{>0}$ such that $bK=cK$ }.
\]
Then
\[
\nabla^{-w^\bullet\lambda}_{-w^\bullet\lambda}(b\theta(b)^{-1}) = \nabla^{-w^\bullet\lambda}_{-w^\bullet\lambda}(c\theta(c)^{-1})=\langle\xi_{-w^ \bullet\lambda},c\theta(c)^{-1}\xi_{-w^\bullet\lambda}\rangle=\langle\xi_{-w^\bullet\lambda},s\theta(s)^{-1}\xi_{-w^\bullet\lambda}\rangle.
\]
Now the computation for the element $s \theta{(s)}^{-1}$ is entirely similar to the computation in the proof of Theorem~\ref{thm:Lu4.3}. This follows by repeatedly applying \S\ref{subsec:Rank-One} (a) (and (b) trivially) and \eqref{eq:Luxy}.  This proves claim (a). 

We next show claim (b). 

Assume $gK$ is not in $B^+K/K$. We show $\Pi^\lambda_{w^\bullet}(gK)=0$ by showing that $\Delta^{w^\bullet\lambda}_{w^\bullet\lambda}(g\theta(g)^{-1})=0$. The other statement is similar.

Recall  $B^+K/K$ is precisely the unique open $B^+$-orbit. Let $B^+zK/K$ be a codimension-one $B^+$-orbit for some $z\in G$ such that the closure contains $gK$. By Springer \cite{SpringerAlgebraicGroupsWithInvolutions}*{\S6} (see also \cite{BaoSongSymmetricSubgroupSchemesFrobenius}*{\S2.4}), the double coset  $B^+z \theta(z) \theta(B^+)$ is either $B^+\dot s_i \theta(\dot s_i^{-1})\theta(B^+)$ or $B^+\dot s_i \theta(B^+)$ for some $i \in I_{\circ}$. In any case, we have
\[
\iota(B^+z)\subset B^+\dot{w}\dot{w}^\bullet B^+\dot{w}^{\bullet, -1},\qquad \text{with } ww^\bullet \le  w^\bullet.
\]
It follows that $\Delta^{w^\bullet\lambda}_{w^\bullet\lambda}(B^+zK/K)=0$, and hence $\Delta^{w^\bullet\lambda}_{w^\bullet\lambda}(\overline{B^+zK/K})=0$. Therefore $\Delta^{w^\bullet\lambda}_{w^\bullet\lambda}(gK) = 0$.

This completes the proof.
\end{proof}

\subsection{Proof of the main theorem}

In this subsection we prove the remaining assertions of Theorem~\ref{thm:main-positive-strata}.

\begin{proof}
\begin{itemize}[leftmargin=*]
\item Part (1) is proved in Theorem~\ref{thm::(1)}. 
\item 
We show Part (2) now. Recall the isomorphism $f: B^-K/K \cong U_{P_{I_\bullet}^-} \times \overline{T}$. We further denote by $f_1:  B^-K/K  \rightarrow  U_{P_{I_\bullet}^-}$ and $f_2: B^-K/K  \rightarrow  \overline{T}$ the projections to each component. Hence by Part (1), we have (in the Hausdorff topology) 
\[
\overline{G_{>0} K/K} = f^{-1} ( \overline{f_1 (G_{>0} K/K)} \times \overline{f_2 (G_{>0} K/K)}).
\]
By Lemma~\ref{lem:Tbar}, we have $\overline{f_2 (G_{>0} K/K)} = \overline{T}_{>0}$. We can also determine $\overline{f_1 (G_{>0} K/K)}$ by \S\ref{sec:partial-flag-tnn} thanks to the commutative diagram 
\[
\begin{tikzcd}
U^- 
  \arrow[r]
  \arrow[d]
&
U^- P^+_{I_\bullet}/P^+_{I_\bullet}
  \arrow[d]
\\
U^-K/K
  \arrow[r]
&
U_{P^-_{I_\bullet}}.
\end{tikzcd}
\] 
Therefore we have
\[
\mathcal S_{\geq0}=\bigsqcup_{(v,w)\in P}G^{>0}_{\mathbf v_+,\mathbf w}  {T}_{>0}K/K.
\]
Thanks to the isomorphism in \S\ref{sec:partial-flag-tnn} (c), it suffices to show
\[
G^{>0}_{\mathbf v_+,\mathbf w} {T}_{>0}K/K\subset \mathcal S_{\phi(v,w)}.
\]
Recall $G^{>0}_{\mathbf v_+,\mathbf w}\subset U^-\dot{v} \cap B^+\dot{w}B^+$ from \S\ref{sec:TPFlag}. By direct computation, we have 
\begin{align*}
&\iota(G^{>0}_{\mathbf v_+,\mathbf w}T_{>0}K/K) \subset U^- \dot{v} \theta(B^+\dot{w}^{-1} B^+)= U^- \dot{v} \dot{w}^{\bullet}   B^+ \dot{w}^{\bullet,-1} \theta(\dot{w}^{-1}) \dot{w}^\bullet B^+ \dot{w}^{\bullet,-1}.
\end{align*}
Note that $\ell (vw^\bullet \tau(w)^{-1}) = \ell(vw^\bullet) - \ell( \tau(w)^{-1})$. Hence, by the usual property of Bruhat cells, we conclude that 
\[
U^- \dot{v} \dot{w}^{\bullet}   B^+   \tau(\dot{w}^{-1})  B^+ \dot{w}^{\bullet,-1}= U^- \dot{v} \dot{w}^{\bullet} \tau(\dot{w}^{-1})  B^+ \dot{w}^{\bullet,-1} = U^- \dot{v} \theta(\dot{w}^{-1}) \dot{w}^{\bullet}   B^+ \dot{w}^{\bullet,-1}.
\]
This proves item (2).

\item Part (3) is entirely similar to Part (2), using the corresponding positive-root version of \S\ref{sec:partial-flag-tnn}. 
\item Part (4) follows from either Part (2) or Part (3).
\item Part (5) follows from \S\ref{sec:partial-flag-tnn}, Proposition~\ref{prop:poset-isomorphism} and the isomorphism $B^-K/K \cong U_{P_{I_\bullet}^-} \times \overline{T} \cong U^-P^+_{I_\bullet} / P^+_{I_\bullet}$.
\item 
We prove Part (6) now. It follows from Part (2) that $\mathcal{S}_{u}^{>0}$ is connected.  By Lemma~\ref{le:Su}, the subvariety $\mathring{\mathcal  S}_u$ is smooth of dimension $\ell(w^\bullet) - \ell(u) + r$. Hence $\mathring{\mathcal  S}_u (\mathbb {R})$ is smooth of real dimension $\ell(w^\bullet) - \ell(u) + r$. Then by Brouwer's theorem of ``invariance of domain", we see that the parameterization $G^{>0}_{\mathbf v_+,\mathbf w}\times\overline{T}_{>0} \rightarrow \mathring{\mathcal  S}_u(\mathbb R)$ is an open map. Then by Part (5), we know the image is closed. Therefore it is a connected component in the real locus. 

\item We show Part (7). The essential point is already visible in the rank-one exchange relations in \S\ref{subsec:Rank-One} and the exchange relations in \S\ref{sec:Lutp}; see Remark~\ref{rem:rankonesubtractionfree}. It follows immediately that the coordinate change for the case $v=e$ is subtraction-free.

Let $(v,w) \in P$ with $v \neq e$. Fix a reduced expression $\mathbf w$ of $w$. Let $\mathbf v_+$ determine the positive expression of $v$ inside $\mathbf w$. Similarly define $\mathbf w'$ and $\mathbf v'_+$. The following bijection is subtraction-free by \cite{BaoHe2021Semifields}*{Proposition~6.2} and the isomorphism $B^-K/K \cong U_{P_{I_\bullet}^-} \times \overline{T} \cong U^-P^+_{I_\bullet} / P^+_{I_\bullet}$:
\begin{align*}
(\mathbb{R}_{>0})^{\ell(w)+r}
&\longrightarrow
U^+_{v,>0}\times G^{>0}_{\mathbf v_+,\mathbf w}\times\overline {T}_{>0}
\longrightarrow \mathcal S_{>0} \\
&\longrightarrow
G^{>0}_{\mathbf e,\mathbf w}\times\overline{T}_{>0}
\longrightarrow
(\mathbb{R}_{>0})^{\ell(w)+r}.
\end{align*}
Similarly the following bijection is also subtraction-free:
\begin{align*}
(\mathbb{R}_{>0})^{\ell(w')+r}
&\longrightarrow
U^-_{v',>0}\times H^{>0}_{\mathbf v'_+,\mathbf w'}\times\overline{T}_{>0}
\longrightarrow \mathcal S_{>0} \\
&\longrightarrow
H^{>0}_{\mathbf e,\mathbf w'}\times\overline{T}_{>0}
\longrightarrow
(\mathbb{R}_{>0})^{\ell(w')+r}.
\end{align*}
By \cite{GKL}*{Lemma~5.9}, taking the coordinates in $U^+_{v,>0}$ to the boundary gives the desired subtraction-free transition map on the cell $\mathcal S^{>0}_{\phi(v,w)}$. 

\item We prove Part (8) now. By Part (2), we have 
\[
 \mathcal{S}_{\ge 0} \cong \left((U^-P^+_{I_\bullet}/P^+_{I_\bullet})\cap(\mathcal P_{I_\bullet})_{\geq0} \right)\times \overline{T}_{>0}.
\]
It follows from Lemma~\ref{lem:contractible} that $\mathcal{S}_{\ge 0}$ is contractible.
\qedhere
\end{itemize}
\end{proof}


\begin{thebibliography}{10}

\bib{BaoHe2021}{article}{
  author={Bao, Huanchen},
  author={He, Xuhua},
  title={A Birkhoff--Bruhat atlas for partial flag varieties},
  journal={Indag. Math. (N.S.)},
  volume={32},
  date={2021},
  number={5},
  pages={1152--1173},
}

\bib{BaoHe2021Semifields}{article}{
  author={Bao, Huanchen},
  author={He, Xuhua},
  title={Flag manifolds over semifields},
  journal={Algebra Number Theory},
  volume={15},
  date={2021},
  number={8},
  pages={2037--2069},
  doi={10.2140/ant.2021.15.2037},
}

\bib{BaoHe2022TwistedProduct}{article}{
  author={Bao, Huanchen},
  author={He, Xuhua},
  title={Total positivity in twisted product of flag varieties},
  eprint={arXiv:2211.11168},
  date={2022},
}
\bib{BaoHe2024Product}{article}{
  author={Bao, Huanchen},
  author={He, Xuhua},
  title={Product structure and regularity theorem for totally nonnegative flag varieties},
  journal={Invent. Math.},
  volume={237},
  date={2024},
  number={1},
  pages={1--47},
  doi={10.1007/s00222-024-01256-2},
}
\bib{BaoSongCoordinateRingsSymmetricSpaces}{article}{
  author={Bao, Huanchen},
  author={Song, Jinfeng},
  title={Coordinate rings on symmetric spaces},
  eprint={arXiv:2402.08258},
  date={2024},
}


\bib{BaoSongSymmetricSubgroupSchemesFrobenius}{article}{
  author={Bao, Huanchen},
  author={Song, Jinfeng},
  title={Symmetric subgroup schemes, Frobenius splittings, and quantum symmetric pairs},
  eprint={arXiv:2212.13426},
  date={2022},
}

\bib{BaoWangiCanonicalBasis}{article}{
  author={Bao, Huanchen},
  author={Wang, Weiqiang},
  title={Canonical bases arising from quantum symmetric pairs},
  eprint={arXiv:1610.09271},
  date={2018},
}



\bib{CLT10}{article}{
  author={Chan, Kei Yuen},
  author={Lu, Jiang-Hua},
  author={To, Simon Kai-Ming},
  title={On intersections of conjugacy classes and Bruhat cells},
  journal={Transform. Groups},
  volume={15},
  date={2010},
  number={2},
  pages={243--260},
  issn={1083-4362},
  review={\MR{2657442}},
  doi={10.1007/s00031-010-9084-7},
}

\bib{DP}{article}{
  author={De Concini, Corrado},
  author={Procesi, Claudio},
  title={Complete symmetric varieties},
  conference={
    title={Invariant theory},
    address={Montecatini},
    date={1982},
  },
  book={
    series={Lecture Notes in Math.},
    volume={996},
    publisher={Springer},
    place={Berlin},
  },
  date={1983},
  pages={1--44},
}

\bib{EvensLu2006}{article}{
  author={Evens, Sam},
  author={Lu, Jiang-Hua},
  title={On the variety of {L}agrangian subalgebras. {II}},
  journal={Ann. Sci. \'Ecole Norm. Sup. (4)},
  volume={39},
  date={2006},
  number={2},
  pages={347--379},
}

\bib{GKL}{article}{
  author={Galashin, Pavel},
  author={Karp, Steven N.},
  author={Lam, Thomas},
  title={Regularity theorem for totally nonnegative flag varieties},
  journal={J. Amer. Math. Soc.},
  volume={35},
  date={2022},
  number={2},
  pages={513--579},
  doi={10.1090/jams/983},
}

\bib{Lu2014TLeaves}{article}{
  author={Lu, Jiang-Hua},
  title={On the {$T$}-leaves and the ranks of a {P}oisson structure on twisted conjugacy classes},
  journal={Indag. Math. (N.S.)},
  volume={25},
  date={2014},
  number={5},
  pages={1102--1121},
  doi={10.1016/j.indag.2014.07.011},
}



\bib{LuTotalPositivity}{article}{
  author={Lusztig, George},
  title={Total positivity in reductive groups},
  journal={Lie theory and geometry},
  pages={531--568},
  publisher={Birkh\"auser},
  date={1994},
}
\bib{Lusztig1998}{article}{
  author={Lusztig, George},
  title={Introduction to total positivity},
  booktitle={Positivity in {L}ie theory: open problems},
  series={de Gruyter Exp. Math.},
  volume={26},
  publisher={de Gruyter},
  place={Berlin},
  date={1998},
  pages={133--145},
}
\bib{LusztigTotalPositivitySymmetricSpaces}{article}{
  author={Lusztig, George},
  title={Total positivity in symmetric spaces},
  journal={Represent. Theory},
  volume={26},
  date={2022},
  pages={1025--1046},
  eprint={arXiv:2107.13447},
  doi={10.1090/ert/628},
}

\bib{MarshRietschParametrizationsFlagVarieties}{article}{
  author={Marsh, R. J.},
  author={Rietsch, K.},
  title={Parametrizations of flag varieties},
  eprint={arXiv:math/0307017},
  date={2004},
}

\bib{Ri92}{article}{
  author={Richardson, R. W.},
  title={Intersections of double cosets in algebraic groups},
  journal={Indag. Math. (N.S.)},
  volume={3},
  date={1992},
  number={1},
  pages={69--77},
  issn={0019-3577},
  review={\MR{1157520}},
  doi={10.1016/0019-3577(92)90028-J},
}

\bib{RichardsonSpringerBruhatOrderSymmetricVarieties}{article}{
  author={Richardson, R. W.},
  author={Springer, T. A.},
  title={The Bruhat order on symmetric varieties},
  journal={Geom. Dedicata},
  volume={35},
  date={1990},
  pages={389--436},
}

\bib{RietschClosureRelationsPartialFlag}{article}{
  author={Rietsch, K.},
  title={Closure relations for totally nonnegative cells in $G/P$},
  eprint={arXiv:math/0509137},
  date={2005},
}

\bib{SongSymmetricSubgroupSchemes}{article}{
  author={Song, Jinfeng},
  title={Symmetric subgroup schemes},
  eprint={arXiv:2512.01398},
  date={2025},
}

\bib{SpringerLinearAlgebraicGroups}{book}{
  author={Springer, T. A.},
  title={Linear algebraic groups},
  edition={2},
  publisher={Birkh\"auser},
  date={1998},
}

\bib{SpringerAlgebraicGroupsWithInvolutions}{article}{
  author={Springer, T. A.},
  title={Some results on algebraic groups with involutions},
  booktitle={Algebraic groups and related topics},
  series={Advanced Studies in Pure Mathematics},
  volume={6},
  pages={525--543},
  publisher={North-Holland},
  date={1985},
}

\bib{WikipediaSatakeDiagram}{misc}{
  author={Wikipedia contributors},
  title={Satake diagram},
  note={Wikipedia, \url{https://en.wikipedia.org/wiki/Satake_diagram}. Accessed 2026-06-07},
}
\end{thebibliography}
\end{document}